\documentclass[12pt,reqno]{article}

\usepackage{amsfonts}
\usepackage{mathrsfs}

\setlength{\textwidth}{6.3in} \setlength{\textheight}{9.25in}
\setlength{\evensidemargin}{0in} \setlength{\oddsidemargin}{0in}
\setlength{\topmargin}{-.3in}

\usepackage{amsmath,amsthm,amsfonts,amssymb,latexsym,mathrsfs,color}

\usepackage[colorlinks=true,
linkcolor=webblue, filecolor=webbrown, citecolor=webred ]{hyperref}
\definecolor{webblue}{rgb}{0,.5,0}
\definecolor{webred}{rgb}{0,.5,0}
\definecolor{webbrown}{rgb}{.6,0,0}

\newtheorem{thm}{Theorem}[section]
\newtheorem{lem}[thm]{Lemma}

\newtheorem{prop}[thm]{Proposition}

\theoremstyle{definition}

\newtheorem{rem}[thm]{Remark}

\numberwithin{equation}{section}

\newcommand{\des}{{\rm des\,}}

\newcommand{\msn}{{\mathcal S}_n}

\title{A unified approach to combinatorial triangles: a generalized Eulerian
polynomial
\author{Bao-Xuan Zhu\thanks{Supported by the National Natural Science Foundation of China (No. 11971206).
\newline\hspace*{5mm}
   {\it Email address:} bxzhu@jsnu.edu.cn (B.-X. Zhu)}}}
\date{\footnotesize School of Mathematics and Statistics,
         Jiangsu Normal University,
         Xuzhou 221116, PR China}

\begin{document}

\maketitle

\begin{abstract}
Motivated by the classical Eulerian number, descent and excedance
numbers in the hyperoctahedral groups, an triangular array from
staircase tableaux and so on, we study a triangular array $[\mathcal
{T}_{n,k}]_{n,k\ge 0}$ satisfying the recurrence relation:
\begin{equation*}
\mathcal {T}_{n,k}=\lambda(a_0n+a_1k+a_2)\mathcal
{T}_{n-1,k}+(b_0n+b_1k+b_2)\mathcal
{T}_{n-1,k-1}+\frac{cd}{\lambda}(n-k+1)\mathcal {T}_{n-1,k-2}
\end{equation*}
with $\mathcal {T}_{0,0}=1$ and $\mathcal {T}_{n,k}=0$ unless $0\le
k\le n$. We derive a functional transformation for its
row-generating function $\mathcal{T}_n(x)$ from the row-generating
function $A_n(x)$ of another array $[A_{n,k}]_{n,k}$ satisfying a
two-term recurrence relation. Based on this transformation, we can
get properties of $\mathcal {T}_{n,k}$ and $\mathcal{T}_n(x)$
including nonnegativity, log-concavity, real rootedness, explicit
formula and so on. Then we extend the famous Frobenius formula, the
$\gamma$ positivity decomposition and the David-Barton formula for
the classical Eulerian polynomial to those of a generalized Eulerian
polynomial. We also get an identity for the generalized Eulerian
polynomial with the general derivative polynomial.

Finally, we apply our results to an array from the Lambert function,
a triangular array from staircase tableaux and the alternating-runs
triangle of type $B$ in a unified approach.
\bigskip\\
{\sl MSC:}\quad 05A15; 05A19; 05A20; 26C10
\bigskip\\
{\sl Keywords:}\quad Eulerian numbers; Eulerian polynomials;
Recurrence relations; Generating functions; Real zeros;
Log-concavity; Strong $q$-log-convexity; $\gamma$-positivity;
\end{abstract}


\section{Introduction}
\subsection{Eulerian numbers }

Let $S_n$ denote the symmetric group on $n$-elements
$[n]=\{1,2,\ldots,n\}$. Let $\pi=\pi(1)\cdots \pi(n)\in S_n$. An
element $i\in [n-1]$ is called a descent of $\pi$ if
$\pi(i)>\pi({i+1})$. The classical Eulerian polynomial is defined by
$$E_n(x)=\sum_{\pi\in S_n}x^{1+\des(\pi)}=\sum_{k}\left\langle
  \begin{array}{ccccc}
    n \\
   k\\
  \end{array}
\right\rangle x^k,$$ where $\des(\pi)$ denotes the number of
descents of $\pi$ and $\left\langle
  \begin{array}{ccccc}
    n \\
   k\\
  \end{array}
\right\rangle$ is called the {\it Eulerian number}. Eulerian numbers
and polynomials have many nice properties. For example:
\begin{itemize}
\item [\rm (i)]
The Eulerian number $\left\langle
  \begin{array}{ccccc}
    n \\
   k\\
  \end{array}
\right\rangle$ satisfies the recurrence relation
\begin{equation*}\label{rr-an}
\left\langle
  \begin{array}{ccccc}
    n \\
   k\\
  \end{array}
\right\rangle=k\left\langle
  \begin{array}{ccccc}
    n-1 \\
   k\\
  \end{array}
\right\rangle+(n-k+1)\left\langle
  \begin{array}{ccccc}
    n-1 \\
   k-1\\
  \end{array}
\right\rangle
\end{equation*}
with initial conditions $\left\langle
  \begin{array}{ccccc}
    0 \\
   0\\
  \end{array}
\right\rangle=1$ and $\left\langle
  \begin{array}{ccccc}
    0 \\
   k\\
  \end{array}
\right\rangle=0$ for $k\geq1$ or $k<0$, see \cite{Com74}. Clearly,
if let $\left\langle
  \begin{array}{ccccc}
    n+1 \\
   k+1\\
  \end{array}
\right\rangle=\overrightarrow{E}_{n,k}$ for $n,k\geq1$, then shift
Eulerian numbers $\overrightarrow{E}_{n,k}$ satisfy the next
recurrence
\begin{equation}\label{rec+shift+Eulerian}
\overrightarrow{E}_{n,k}=(k+1)\overrightarrow{E}_{n-1,k}+(n-k+1)\overrightarrow{E}_{n-1,k-1}
\end{equation}
 with initial conditions $\overrightarrow{E}_{0,0}=1$ and $\overrightarrow{E}_{0,k}=0$
for $k\geq1$ or $k<0$.
\item [\rm (ii)]
It is well-known that $E_n(x)$ is related to Stirling numbers of the
second kind $\left\{
  \begin{array}{ccccc}
    n \\
   k\\
  \end{array}
\right\}$ by the famous Frobenius formula
\begin{eqnarray}\label{relation+stirling+eulerian}
E_{n}(x) =\sum_{i=1}^ni!\left\{
  \begin{array}{ccccc}
    n \\
   i\\
  \end{array}
\right\}x^i(1-x)^{n-i}.
\end{eqnarray}
This implies an explicit formula for Eulerian numbers:
\begin{eqnarray}\label{formu+eulerian}
\left\langle
  \begin{array}{ccccc}
    n \\
   k\\
  \end{array}
\right\rangle&=&\sum_{i=1}^n\left\{
  \begin{array}{ccccc}
    n \\
   i\\
  \end{array}
\right\}i!\binom{n-i}{k-i}(-1)^{k-i},
\end{eqnarray}
where
\begin{eqnarray}\label{formu+stirling}
\left\{
  \begin{array}{ccccc}
    n \\
   i\\
  \end{array}
\right\}i!=\sum_{j=1}^i(-1)^{i-j}\binom{i}{j}j^n
\end{eqnarray}
for $n,i\geq0$, see B\'{o}na \cite{Bona12} for instance.
\item [\rm (iii)]
It is also well known that $E_n(x)$ has only real zeros and
therefore is log-concave. In addition, Foata and Sch\"{u}tzenberger
\cite{FS70} and Foata and Strehl \cite{FS74} proved for $E_n(x)$
that there exists nonnegative numbers $\gamma_{n,k}$ such that
\begin{equation}\label{relation+gamma+Eulerian}
E_n(x)=\sum_{k=0}^{n/2}\gamma_{n,k}x^k(1+x)^{n-1-2k}
\end{equation} for $n\geq0$. This is an important property (called the {\it $\gamma$-positivity }\cite{At17}) because it implies symmetry and unimodality of
$E_n(x)$.
\item [\rm (iv)]
The Eulerian polynomial $E_n(x)$ also has a closely connection with
the famous alternating-runs polynomial $R_n(x)$ by
\begin{eqnarray}\label{relation+run+Eulerian}
R_n(x) = \left(\frac{1+x}{2}\right)^{n-1}
(1+w)^{n+1}E_n\left(\frac{1-w}{1+w}\right), w =\sqrt{\frac{1 - x}{1
+ x}},\, \text{for}\, n\geq2,
\end{eqnarray} see David and Barton \cite{DB} and Knuth \cite{Kur73}.
\end{itemize}
The Eulerian number and polynomial have a rich history and appear in
many contexts in combinatorics; see \cite{Pe15} for a detailed
exposition.

\subsection{Descent and excedance numbers in the hyperoctahedral groups}

Let $B_n$ be the hyperoctahedral group of rank $n$. Let
$\left\langle
  \begin{array}{ccccc}
    n \\
   k\\
  \end{array}
\right\rangle_B$ be the Eulerian number of type $B$ counting the
elements of $B_n$ with $k$ $B$-descents. Then $\left\langle
  \begin{array}{ccccc}
    n \\
   k\\
  \end{array}
\right\rangle_B$ satisfies the recurrence
\begin{equation}\label{rec-Euler+B}
\left\langle
  \begin{array}{ccccc}
    n \\
   k\\
  \end{array}
\right\rangle_B=(2k+1)\left\langle
  \begin{array}{ccccc}
    n-1 \\
   k\\
  \end{array}
\right\rangle_B+(2n-2k+1)\left\langle
  \begin{array}{ccccc}
    n-1 \\
   k-1\\
  \end{array}
\right\rangle_B
\end{equation}
with initial conditions $\left\langle
  \begin{array}{ccccc}
    0 \\
   0\\
  \end{array}
\right\rangle_B=1$ and $\left\langle
  \begin{array}{ccccc}
    n \\
   k\\
  \end{array}
\right\rangle_B=0$ unless $0\leq k\leq n$, see Brenti
\cite{Bre94EuJC}. Let $ \mathfrak{B}_n(x)=\sum_{k=0}^{n}\left\langle
  \begin{array}{cc}
    n \\
   k\\
  \end{array}
\right\rangle_Bx^k $ be the Eulerian polynomials of type $B$. In
fact, Eulerian numbers (resp. polynomials) of type $B$ share many
properties of classical Eulerian numbers (resp. polynomials), see
\cite[A060187]{Slo} for details.

The excedance number in $S_n$ is defined by:
$$exc(\pi)=|\{i\in\{1,2,\ldots,n\}: \pi(i)>i,\pi\in S_n\}|.$$ It is
well-known that the excedance number and the descent number have the
same distribution and they both are the classical Eulerian number
$\left\langle
  \begin{array}{ccccc}
    n \\
   k\\
  \end{array}
\right\rangle$.

Bagno {\it et al.} gave a definition of the excedance number in
$B_n$, see \cite{BGMS15} for details. Let $X_{n,k}$ be the number of
permutations $\pi\in B_n$ having $exc_A(\pi)=k$. It was proved that
$$X_{n,k}=(n-k)X_{n-1,k-1}+(n+1)X_{n-1,k}+(k+1)X_{n-1,k+1}$$
for $n\geq1$ with $X_{0,0}=1$ \cite{BGMS15}. In addition, in
\cite{BGMS15}, they showed that the sequence $(X_{n,k})_{k=0}^n$ is
log-concave by induction. Let $X^*_{n,k}=X_{n,n-k}$. Then the array
$[X^*_{n,k}]_{n,k\geq0}$ satisfies the recurrence relation
\begin{eqnarray}\label{rec+X}
X^*_{n,k}=kX^*_{n-1,k}+(n+1)X^*_{n-1,k-1}+(n-k+1)X^*_{n,k-2}
\end{eqnarray} for $n\geq1$ with $X^*_{0,0}=1$. Using the recurrence relation (\ref{rec+X}), we proved many
properties of $X^{*}_{n,k}$ similar to those of the Eulerian number
$\left\langle
  \begin{array}{ccccc}
    n \\
   k\\
  \end{array}
\right\rangle$, see \cite[Propostion 3.3]{Zhu20} for details.

\subsection{Staircase tableaus}

The Partially Asymmetric Simple Exclusion Process (PASEP) is a
physical model in which $n$ sites on a one-dimensional lattice are
either empty or occupied by a single particle. This model is also a
Markov chain. Staircase tableaux are combinatorial objects which
appear as key tools in the study of the PASEP physical model, see
Corteel, Mandelshtam and Williams \cite{CMW17}, Corteel, Stanley,
Stanton, and Williams \cite{CSSW11}, Corteel and Williams
\cite{CW07,CW10,CW11} for instance. Let us recall the definitions as
follows.

 A staircase tableau $T$ of size $n$ is a
Ferrers diagram of ¡°staircase¡± shape $(n, n-1, \cdots, 2, 1)$ such
that boxes are either empty or labeled with $\alpha$, $\beta$,
$\gamma$ or $\delta$, and satisfying the following conditions: (1)
no box along the diagonal of $T$ is empty; (2) all boxes in the same
row and to the left of a $\beta$ or a $\delta$ are empty; (3) all
boxes in the same column and above an $\alpha$ or a $\gamma$ are
empty.

For staircase tableaux without any $\gamma$ label, we denote by
$\mathscr{T}_{n,k}$ the number of such tableaux $T$ of size $n$ with
$k$ labels $\alpha$ or $\delta$ in the diagonal. Aval, Boussicault
and Dasse-Hartaut \cite{ABD13}  proved for $n\geq0$ and all $k\in Z$
that
\begin{eqnarray}\label{rec+ST}
\mathscr{T}_{n,k}&=&(k+1)\mathscr{T}_{n-1,k}+(n+1)\mathscr{T}_{n-1,k-1}+(n-k+1)\mathscr{T}_{n-1,k-2},
\end{eqnarray}
where $\mathscr{T}_n(x)=\sum_{k\geq0}\mathscr{T}_{n,k}q^k$ for
$n\geq0$. In \cite{ABD13}, it was proved that the polynomial
$\mathscr{T}_n(x)$ has only real zeros and is log-concave.

\subsection{An array from the Lambert function} The Lambert $W$ function
was defined in \cite{CGHJK} by $$We^W=x.$$ The $n$-th derivative of
$W$ is given implicitly by
$$\frac{d^nW(x)}{dx^n}=\frac{e^{-nW(x)}p_n(W(x))}{(1+W(x))^{2n-1}}$$
for $n\geq1$, where
$p_n(x)=(-1)^{n-1}\sum_{k=0}^{n-1}\beta_{n,k}x^k$ are polynomials
satisfying the recurrence relation
\begin{eqnarray*}
p_{n+1}(x)=-(nx+3n-1)p_n(x)+(1+x)p'_n(x)
\end{eqnarray*}
for $n\geq1$, where the array $[\beta_{n,k}]_{n\geq1,k\geq0}$
satisfies the recurrence relation
\begin{eqnarray}\label{rec+lab+array}
\beta_{n+1,k}=(3n-k-1)\beta_{n,k}+n\beta_{n,k-1}-(k+1)\beta_{n,k+1}
\end{eqnarray}
for $n,k\geq0$ with $\beta_{1,0}=1$. Kalugin and Jeffrey \cite{KJ11}
proved that if each polynomial $(-1)^{n-1}p_n(w)$ has all positive
coefficients, then it follows that $\frac{dW(x)}{dx}$ is a
completely monotonic function. In order to get the positivity, by
nontrivial computations, they proved that the coefficients are
unimodal and log-concave. If let
$\beta^{\circ}_{n,k}=\beta_{n+1,n-k}$ for $n,k\geq0$, then the array
$[\beta^{\circ}_{n,k}]_{n,k\geq0}$ exactly satisfies the recurrence
relation
\begin{eqnarray}\label{rec+labert+array}
\beta^{\circ}_{n,k}=n\beta^{\circ}_{n-1,k}+(2n+k-1)\beta^{\circ}_{n-1,k-1}-(n-k+1)\beta^{\circ}_{n-1,k-2}
\end{eqnarray}
for $n,k\geq0$ with $\beta^{\circ}_{0,0}=1$, which easily implies
the positivity of $\beta^{\circ}_{n,k}$, see Section $4$.

\subsection{Structure of this paper}
As we know that Eulerian polynomials and Eulerian numbers have many
nice properties. In addition, different generalizations of Eulerian
polynomials and Eulerian numbers were considered, see Haglund-Zhang
\cite{HZ19}, Han-Mao-Zeng \cite{HMZ19}, Rz\c{a}dkowski-Urli\'{n}ska
\cite{RU19}, Zhu \cite{Zhu20}, Zhuang \cite{Zhuang17}. But recently
in many new combinatorial enumerations, there bring out more and
more combinatorial triangles satisfying certain three-term
recurrence similar to recurrence relations (\ref{rec+X}),
(\ref{rec+ST}) and (\ref{rec+labert+array}). Motivated by these, the
aim of this paper is to consider their generalization and extend
those properties of Eulerian polynomials and Eulerian numbers to
those of the generalized case. Then we can apply to some
combinatorial triangles in a unified approach.

Let $\mathbb{R}$ (resp. $\mathbb{R^{+}}$, $\mathbb{R^{\geq}}$) be
the set of all (resp., positive, nonnegative) real numbers. Let
$\{b_1,c\}\in \mathbb{R}$, $\lambda\in \mathbb{R^{+}}$ and
$\{a_0,a_1,a_2,b_0,b_2,d\}\subseteq \mathbb{R^{\geq}}$. Define a
generalized triangular array $[\mathcal {T}_{n,k}]_{n,k\ge 0}$ by
the recurrence relation:
\begin{equation}\label{Recurece+T2}
\mathcal {T}_{n,k}=\lambda(a_0n+a_1k+a_2)\mathcal
{T}_{n-1,k}+(b_0n+b_1k+b_2)\mathcal
{T}_{n-1,k-1}+\frac{cd}{\lambda}(n-k+1)\mathcal {T}_{n-1,k-2}
\end{equation}
with $\mathcal {T}_{0,0}=1$ and $\mathcal {T}_{n,k}=0$ unless $0\le
k\le n$. Also denote its row-generating function by $\mathcal
{T}_n(x)=\sum_{k\geq0}\mathcal {T}_{n,k}x^k$ for $n\geq0$.

In Section $2$, we derive a functional transformation of
$\mathcal{T}_n(x)$ from the row-generating function of another array
$[A_{n,k}]_{n,k}$ satisfying a two-term recurrence relationship, see
Theorem \ref{thm+m+tran}. Based on the transformation, we can
immediately apply nonnegativity, log-concavity and zeros of $A_n(x)$
to those of $\mathcal {T}_n(q)$.

In Section $3$, we first give a generalization of the formula
(\ref{formu+stirling})(Theorem \ref{thm+Stirling+family}). Then we
extend the identity (\ref{relation+stirling+eulerian}) and formula
(\ref{formu+eulerian}) to the generalized Frobenius formula (Theorem
\ref{thm+EF}). We also present a $\gamma$ positivity result for a
generalized Eulerian polynomial (Theorem \ref{thm+SEF}), which gives
a unified proof for $\gamma$ positivity of Eulerian polynomials of
two kinds. Finally, for a generalized Eulerian polynomial, we
present a David-Barton type formula, which can be looked as a
generalization of (\ref{relation+run+Eulerian}).

In Section $4$, we apply our results to some important combinatorial
triangles in a unified manner, including the array
$[\beta_{n,k}]_{n,k}$ from the Lambert function, the triangular
array $[\mathscr{T}_{n,k}]_{n,k}$ from staircase tableaux, and
alternating-runs triangle of type $B$.

Throughout this paper, for an array $[M_{n,k}]_{n,k\geq0}$, we let
its row-generating function $M_n(x)=\sum_{k=0}^nM_{n,k}x^k$ for
$n\geq0$ and its reciprocal array $[M^*_{n,k}]_{n,k\geq0}$ by
$M^*_{n,k}=M_{n,n-k}$.

\section{An important functional transformation of
$\mathcal{T}_n(x)$}

In this section, we will first present a functional transformation
of $\mathcal{T}_n(x)$ from a triangular array satisfying a two-term
recurrence.

\begin{thm}\label{thm+m+tran}
 Let $[\mathcal {T}_{n,k}]_{n,k\ge 0}$ be defined
in (\ref{Recurece+T2}). If $b_1=da_1-c$, then there exists an array
$[A_{n,k}]_{n,k\geq0}$ satisfying the recurrence relation
 \begin{eqnarray}\label{rec+A+tran}
A_{n,k}
=(a_0n+a_1k+a_2)A_{n-1,k}+[[b_0+d(a_1-a_0)]n-(c+da_1)k+b_2+d(a_1-a_2)]A_{n-1,k-1}
\end{eqnarray}
with $A_{0,0}=1$ and $A_{n,k}=0$ unless $0\le k\le n$ such that
their row-generating functions satisfy
\begin{eqnarray}\label{rel+T+A}
\mathcal {T}_n(x)=(\lambda+dx)^nA_n(\frac{x}{\lambda +dx})
\end{eqnarray} for $n\geq0$. In addition,
$$\mathcal {T}_{n,k}=\sum_{i\geq0}A_{n,i}\binom{n-i}{k-i}{\lambda}^{n-k}d^{k-i}$$
for $n\geq0$ and $k\geq0$.
\end{thm}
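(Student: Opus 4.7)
The plan is to convert the three-term coefficient recurrence (\ref{Recurece+T2}) into an operator identity for the row polynomials $\mathcal{T}_n(x)$, and then verify the substitution (\ref{rel+T+A}) by induction on $n$.

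First I would multiply (\ref{Recurece+T2}) by $x^k$ and sum over $k$. Using $\sum_k k\,\mathcal{T}_{n-1,k}x^k = x\mathcal{T}_{n-1}'(x)$, $\sum_k k\,\mathcal{T}_{n-1,k-1}x^k = x\mathcal{T}_{n-1}(x)+x^2\mathcal{T}_{n-1}'(x)$, and $\sum_k(n-k+1)\mathcal{T}_{n-1,k-2}x^k = (n-1)x^2\mathcal{T}_{n-1}(x) - x^3\mathcal{T}_{n-1}'(x)$, the recurrence becomes
$$\mathcal{T}_n(x) = L_n(x)\,\mathcal{T}_{n-1}(x) + M(x)\,\mathcal{T}_{n-1}'(x),$$
where $L_n(x) = \lambda(a_0n+a_2) + (b_0n+b_1+b_2)x + \frac{cd(n-1)}{\lambda}x^2$ and $M(x) = \lambda a_1 x + b_1 x^2 - \frac{cd}{\lambda}x^3$. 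This is the operator form I will exploit.

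Next I would define $A_n(y)$ via the two-term recurrence (\ref{rec+A+tran}) together with $A_0(y)=1$, and prove (\ref{rel+T+A}) by induction on $n$. Assuming $\mathcal{T}_{n-1}(x) = (\lambda+dx)^{n-1}A_{n-1}(y)$ with $y=x/(\lambda+dx)$, the chain rule $y'(x)=\lambda/(\lambda+dx)^2$ yields $\mathcal{T}_{n-1}'(x) = d(n-1)(\lambda+dx)^{n-2}A_{n-1}(y) + \lambda(\lambda+dx)^{n-3}A_{n-1}'(y)$. Substituting into the operator identity and dividing by $(\lambda+dx)^{n-1}$, the inductive step reduces to verifying that, after using $\lambda+dx=\lambda/(1-dy)$ and $x = \lambda y/(1-dy)$ to rewrite everything in $y$, the right-hand side matches $(\lambda+dx)$ times the polynomial form of the two-term recurrence (\ref{rec+A+tran}).

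The heart of the argument is an algebraic cancellation that uses exactly the hypothesis $b_1=da_1-c$. Rewriting $\lambda a_1 + b_1 x - \frac{cd}{\lambda}x^2$ in the variable $y$ and clearing $(1-dy)^2$, the bracket becomes $a_1(1-dy)^2 + b_1 y(1-dy) - cdy^2$; its coefficient of $y^2$ is $d(a_1d-b_1-c)$, which vanishes precisely when $b_1=da_1-c$, so the bracket collapses to $a_1-(a_1d+c)y$. After this simplification, a short calculation shows that the coefficient of $A_{n-1}'(y)$ on the right-hand side becomes $a_1y-(c+da_1)y^2$, and the coefficient of $A_{n-1}(y)$ becomes $(a_0n+a_2) + [(b_0+d(a_1-a_0))n + b_2-c-da_2]y$, which is exactly the polynomial form of (\ref{rec+A+tran}). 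The base case $n=0$ is immediate from $\mathcal{T}_0=A_0=1$.

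Finally, the closed form for $\mathcal{T}_{n,k}$ follows by expanding
$$(\lambda+dx)^n A_n\!\Bigl(\frac{x}{\lambda+dx}\Bigr) = \sum_{i\geq 0}A_{n,i}\,x^i(\lambda+dx)^{n-i}$$
via the binomial theorem and reading off $[x^k]$. I expect the main obstacle to be the rational-function bookkeeping in the inductive step, where apparent $(1-dy)$-denominators must cancel. Once the critical vanishing $d(a_1d-b_1-c)=0$ is observed, the remaining simplification is careful but routine, and the hypothesis $b_1=da_1-c$ is seen to be not merely sufficient but exactly what drives the collapse.
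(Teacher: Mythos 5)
Your proposal is correct: I checked the operator form of (\ref{Recurece+T2}), the chain-rule expansion of $\mathcal{T}_{n-1}'(x)$, and the two cancellations (the $y^2$-coefficient $d(a_1d-b_1-c)$ in the $A_{n-1}'$ bracket and the $y^2$-coefficient $d(n-1)(b_1+c-da_1)$ in the $A_{n-1}$ bracket), and both vanish exactly under $b_1=da_1-c$, after which the surviving terms reproduce the generating-function form of (\ref{rec+A+tran}); the binomial expansion for the explicit formula is also right. Your route is genuinely different from the paper's. The paper never differentiates against the M\"obius substitution directly: it passes from $[A_{n,k}]$ to its reciprocal array $[A^*_{n,k}]$, applies only the \emph{affine} substitution $B_n(x)=A^*_n(\lambda x+d)$ (where the chain rule is trivial), reads off the resulting three-term coefficient recurrence for $[B_{n,k}]$, reciprocates again, and observes that $[B^*_{n,k}]$ satisfies the same recurrence and initial conditions as $[\mathcal{T}_{n,k}]$ precisely when $b_1=da_1-c$; the identity $\mathcal{T}^*_n(x)=A^*_n(\lambda x+d)$ then unpacks into (\ref{rel+T+A}). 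The paper's factorization of the M\"obius map as ``reciprocal $\circ$ affine $\circ$ reciprocal'' keeps every step at the level of coefficient recurrences and avoids rational-function bookkeeping, at the cost of two array reversals; your direct induction is heavier on algebra but makes the role of the hypothesis $b_1=da_1-c$ more visibly a necessary-and-sufficient collapse condition. One small presentational point: since $(\lambda+dx)^nA_n\bigl(\frac{x}{\lambda+dx}\bigr)$ is a polynomial only because $\deg A_n\le n$, you should note that this degree bound follows from the recurrence (\ref{rec+A+tran}) before treating the inductive identity as one between polynomials rather than rational functions.
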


\begin{proof}
Because the array $[A_{n,k}]_{n,k\geq0}$ satisfies the recurrence
relation
 \begin{eqnarray*}
A_{n,k}
=(a_0n+a_1k+a_2)A_{n-1,k}+[[b_0+d(a_1-a_0)]n-(c+da_1)k+b_2+d(a_1-a_2)]A_{n-1,k-1}
\end{eqnarray*}
with $A_{0,0}=1$ and $A_{n,k}=0$ unless $0\le k\le n$, its
reciprocal array $[A^*_{n,k}]_{n,k\geq0}$ satisfies the recurrence
relation
\begin{eqnarray*}
A^*_{n,k}
&=&[(b_0-da_0-c)n+(c+da_1)k+b_2+d(a_1-a_2)]A^*_{n-1,k}+[(a_0+a_1)n-a_1k+a_2]A^*_{n-1,k-1}
\end{eqnarray*}
with $A^*_{0,0}=1$ and $A^*_{n,k}=0$ unless $0\le k\le n$. It
follows for $n\geq1$ that its row-generating functions satisfy the
next recurrence
\begin{eqnarray}\label{rec+A*(x)}
A^*_n(x)
&=&[(b_0-da_0-c)n+b_2+d(a_1-a_2)+x[(a_0+a_1)n-a_1+a_2]]A^*_{n-1}(x)+\nonumber\\
&&x(c+da_1-a_1x)(A^{*}_{n-1}(x))^{'}.
\end{eqnarray}
This implies that
\begin{eqnarray*}
A^*_n(\lambda x+d)
&=&\left[(b_0-da_0-c)n+b_2+d(a_1-a_2)+(\lambda x+d)[(a_0+a_1)n-a_1+a_2]\right]A^*_{n-1}(\lambda x+d)\nonumber\\
&&+(\lambda x+d)(c-a_1\lambda x)A^{*'}_{n-1}(\lambda x+d).
\end{eqnarray*}
If let $B_n(x)=A^*_n(\lambda x+d)$ for $n\geq0$, then for $n\geq1$,
we have
\begin{eqnarray*}
B_n(x) &=&\{(b_0+da_1-c)n+b_2+\lambda
x[(a_0+a_1)n-a_1+a_2]\}B_{n-1}(x)+(\lambda x+d)(c-\lambda a_1
x)\frac{[B_{n-1}(x)]^{'}}{\lambda},
\end{eqnarray*}
which implies that coefficients array $[B_{n,k}]_{n,k\geq0}$ of
$B_n(x)$ satisfies the next recurrence relation
\begin{eqnarray*}
B_{n,k} &=&[(b_0+da_1-c)n+(c-da_1)k+b_2]B_{n-1,k}+\lambda
[(a_0+a_1)n-a_1k+a_2]B_{n-1,k-1}\\
&&+\frac{cd}{\lambda}(k+1)B_{n-1,k+1}
\end{eqnarray*}
for $n\geq1$. Thus its reciprocal array $[B^*_{n,k}]_{n,k\geq0}$
satisfies the next recurrence relation
\begin{eqnarray*}
B^*_{n,k} &=&[b_0n+(da_1-c)k+b_2]B^*_{n-1,k-1}+\lambda
(a_0n+a_1k+a_2)B^*_{n-1,k}\\
&&+\frac{cd}{\lambda}(n-k+1)B^*_{n-1,k-2}
\end{eqnarray*}
with $B^*_{0,0}=1$ and $B^*_{n,k}=0$ unless $0\le k\le n$.

Obviously, if $b_1=da_1-c$, then arrays $[B^*_{n,k}]_{n,k\geq0}$ and
$[\mathcal {T}_{n,k}]_{n,k\geq0}$ satisfy the same recurrence
relations and the same initial conditions. Thus $B^*_{n,k}=\mathcal
{T}_{n,k}.$

In addition, it follows from
\begin{eqnarray}\label{eq+recip}
\mathcal {T}^*_n(x)=B_n(x)=A_n^*(\lambda x+d)=(\lambda
x+d)^nA_n(\frac{1}{\lambda x+d})
\end{eqnarray}
that
\begin{eqnarray*}
\mathcal {T}_n(x)=x^nT^*_n(1/x)=x^n\left(\frac{\lambda}{
x}+d\right)^nA_n\left(\frac{1}{\frac{\lambda}{
x}+d}\right)=(\lambda+dx)^nA_n\left(\frac{x}{\lambda +dx}\right).
\end{eqnarray*}

Finally, using the identity $\mathcal {T}^{*}_n(x)=A^*_n(\lambda
x+d),$ we have
 \begin{eqnarray*}
\mathcal
{T}^*_{n,k}&=&\sum_{i\geq0}A_{n,n-i}\binom{i}{k}{\lambda}^kd^{i-k},
\end{eqnarray*}
which implies $
\mathcal{T}_{n,k}=\sum_{i\geq0}A_{n,i}\binom{n-i}{n-k}{\lambda}^{n-k}d^{k-i}.
$ This completes the proof.
\end{proof}

\begin{rem}\label{rem+T+A}
The formula in (\ref{rel+T+A}) plays an important role. If we know
certain property of $A_n(x)$, then by (\ref{rel+T+A}) we immediately
get that of $\mathcal{T}_n(x)$.
\end{rem}
 Let
$\{a_k\}_{k=0}^n$ be a sequence of positive numbers. It is called
{\it log-concave} if $a_{k}a_{k+2}\le a_{k+1}^2$ for all $k\ge 0$.
It is well-known that if the polynomial $\sum_{k=0}^na_kx^k$ has
only real zeros, then $\{a_k\}_{k=0}^n$ is log-concave. In fact,
log-concave sequences arise often in combinatorics and have been
extensively investigated. We refer the reader to Br\"and\'en
\cite{Bra15}, Brenti \cite{Bre94}, Liu and Wang \cite{LW-RZP},
Stanely \cite{Sta89}, and Wang and Yeh \cite{WY07} for the
log-concavity and real rootedness.

\begin{prop}\label{prop+m+tran}
Let $[\mathcal {T}_{n,k}]_{n,k\ge 0}$ and $[A_{n,k}]_{n,k\ge 0}$ be
defined in (\ref{Recurece+T2}) and (\ref{rec+A+tran}), respectively.
Assume that $b_1=da_1-c$ and the triangle $[A_{n,k}]_{n,k}$ is
positive, then we have
\begin{itemize}
\item [\rm (i)]
each row-sequence $\{\mathcal {T}_{n,k}\}_{k=0}^n$ is positive and
log-concave;
\item [\rm (ii)] if $A_n(x)$ has only real zeros,
then so does $\mathcal {T}_n(x)$ and zeros of $\mathcal {T}_n(x)$
locate in $[-\frac{\lambda}{d},0]$.
\end{itemize}
\end{prop}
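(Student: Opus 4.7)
The plan is to deduce everything from the transformation in Theorem \ref{thm+m+tran}, which gives both the functional identity $\mathcal{T}_n(x)=(\lambda+dx)^n A_n\!\left(\tfrac{x}{\lambda+dx}\right)$ and the explicit coefficient formula $\mathcal{T}_{n,k}=\sum_{i\ge0}A_{n,i}\binom{n-i}{k-i}\lambda^{n-k}d^{k-i}$. These two forms cover both claims with almost no extra work.

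For the positivity in (i), I would read off the explicit formula directly. Since $[A_{n,k}]_{n,k}$ is positive, $\lambda\in\mathbb{R}^+$, and $d\in\mathbb{R}^{\ge}$, every summand is nonnegative, and the $i=k$ term contributes $A_{n,k}\lambda^{n-k}>0$, which already forces $\mathcal{T}_{n,k}>0$ for $0\le k\le n$. Log-concavity in (i) then follows as soon as we have the real-rootedness of $\mathcal{T}_n(x)$ established in (ii), because a polynomial with positive coefficients and only real zeros has log-concave coefficients by Newton's inequalities. Thus it is natural to prove (ii) first and then harvest (i) as a corollary, using only the fact that positivity of the $A_{n,k}$ forces the zeros of $A_n(x)$ (when they are real) to be nonpositive.

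For (ii), I would factor $A_n(x)=a_n\prod_{j=1}^{n}(x-\alpha_j)$ with all $\alpha_j\in\mathbb{R}$, and substitute into \eqref{rel+T+A}:
\begin{equation*}
\mathcal{T}_n(x)=(\lambda+dx)^n a_n\prod_{j=1}^{n}\!\left(\tfrac{x}{\lambda+dx}-\alpha_j\right)=a_n\prod_{j=1}^{n}\!\bigl[(1-d\alpha_j)\,x-\lambda\alpha_j\bigr].
\end{equation*}
This exhibits $\mathcal{T}_n(x)$ as a product of $n$ linear factors. Because $A_n(x)$ has positive coefficients, each real zero satisfies $\alpha_j\le0$; therefore $1-d\alpha_j\ge1>0$, each factor is genuinely linear, and the zeros of $\mathcal{T}_n(x)$ are $x_j=\tfrac{\lambda\alpha_j}{1-d\alpha_j}$. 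A short monotonicity check in $\alpha_j\in(-\infty,0]$ (the derivative in $\alpha_j$ is $\lambda/(1-d\alpha_j)^2>0$, with value $0$ at $\alpha_j=0$ and limit $-\lambda/d$ as $\alpha_j\to-\infty$) shows $x_j\in[-\lambda/d,0]$, which is exactly the interval claimed.

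The only mild subtlety I anticipate is the degenerate case $d=0$, where the interval $[-\lambda/d,0]$ must be interpreted as $(-\infty,0]$; in that case $\mathcal{T}_n(x)=\lambda^n A_n(x/\lambda)$ and the zeros are $\lambda\alpha_j\le0$, so the statement still holds with the obvious reading. Apart from this bookkeeping, the main work is the factorization above, which I view as the central ingredient; positivity, the zero-location, and log-concavity all follow from it without further computation. No new estimates or combinatorial identities appear to be needed.
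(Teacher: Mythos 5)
Your treatment of positivity and of part (ii) is sound: the factorization $\mathcal{T}_n(x)=a_n\prod_{j}\bigl[(1-d\alpha_j)x-\lambda\alpha_j\bigr]$ is a cleaner, more explicit version of the paper's one-line appeal to (\ref{rel+T+A}), and your monotonicity check correctly places each zero $\lambda\alpha_j/(1-d\alpha_j)$ in $[-\lambda/d,0]$. The genuine gap is in your derivation of log-concavity in (i). You propose to obtain it from real-rootedness of $\mathcal{T}_n(x)$ via Newton's inequalities, but real-rootedness of $A_n(x)$ is a hypothesis only of part (ii); part (i) assumes nothing beyond positivity of the triangle $[A_{n,k}]_{n,k}$. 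A triangle satisfying a two-term recurrence of the form (\ref{rec+A+tran}) need not have real-rooted row-generating functions --- the paper invokes Lemma \ref{lem-LW} separately, and only under extra sign conditions, precisely because real-rootedness is not automatic --- so your argument establishes (i) only under an additional hypothesis that is not available. Likewise, your closing claim that log-concavity ``follows from the factorization without further computation'' presupposes that the factorization over $\mathbb{R}$ exists, which is again the unavailable real-rootedness.

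The paper closes this differently: it first applies Kurtz's theorem \cite{Kur72} to the two-term recurrence (\ref{rec+A+tran}) to conclude that each row $\{A_{n,k}\}_{k=0}^n$ is log-concave, and then transports log-concavity through the substitution $\mathcal{T}^*_n(x)=A^*_n(\lambda x+d)$ of (\ref{eq+recip}) using Brenti's result \cite[Corollary 8.3]{Bre94} that $P(x)\mapsto P(x+1)$ preserves log-concavity of polynomials with positive log-concave coefficients (combined with the harmless rescalings $x\mapsto dx$ and $x\mapsto \lambda x/d$). Some preservation theorem of this kind is genuinely needed: unlike real-rootedness, log-concavity is not preserved by arbitrary substitutions or by arbitrary nonnegative linear transformations of the coefficient sequence, so it cannot simply be read off (\ref{rel+T+A}) or the explicit binomial formula for $\mathcal{T}_{n,k}$. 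If you insert the Kurtz step for $\{A_{n,k}\}_k$ and a Brenti-type preservation step for the substitution, your write-up becomes complete; the rest can stand as is.
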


\begin{proof}
(i)  By (\ref{rec+A+tran}), it follows from Kurtz \cite[Theorem
2]{Kur72} that each row $\{A_{n,k}\}_{k=0}^n$ is log-concave. Thus
the reciprocal sequence $A_{n,n}$,$A_{n,n-1}$, $\ldots, A_{n,1},
A_{n,0}$ is log-concave. This obviously implies that $A^*_n(dx)$ is
log-concave. Let $g(x):=A^*_n(dx)$ and $f(x):=g(x+1)$. Then by
\cite[Corollary 8.3]{Bre94}(it says that if coefficients of a
polynomial $P(x)$ is positive and log-concave, then so is $P(x+1)$),
we have $f(x)=g(x+1)$ is log-concave. So is $f(\frac{\lambda
}{d}x)$. Hence we get that $\mathcal {T}^*_n(x)$ is log-concave
since $\mathcal {T}^*_n(x)=A^*_n(\lambda x+d)=f(\frac{\lambda
}{d}x)$ in (\ref{eq+recip}). This implies that
 $\mathcal {T}_{n,0},\mathcal {T}_{n,1},\ldots,\mathcal {T}_{n,n}$ is log-concave.

(ii) If $A_n(x)$ has only real zeros, then all zeros of $A_n(x)$ is
non-positive. Thus by (\ref{rel+T+A}), all zeros of $A_n(x)$ is
non-positive and locate in $[-\frac{\lambda}{d},0]$. The proof is
complete.
\end{proof}

Since $A^{*}_n(x)$ satisfies the recurrence relation
(\ref{rec+A*(x)}), for real rootedness of $A_n(x)$, we can use the
following result a special case of Corollary $2.4$ in Liu and Wang
\cite{LW-RZP}.
\begin{lem}\cite{LW-RZP}\label{lem-LW}
Let $\{P_n(x)\}_{n\geq0}$ be a sequence of polynomials with
nonnegative coefficients and $0\leq\deg P_n-\deg P_{n-1}\leq1$.
Suppose that
$$P_n(x)=(a_nx+b_n)P_{n-1}(x)+x(c_nx+d_n)P'_{n-1}(x)$$
where $a_n,b_n\in\mathbb{R}$ and $c_n\le 0,d_n\ge 0$. Then $P_n(x)$
has only real zeros and the zeros of $P_n(x)$ interlace those of
$P_{n-1}(x)$ for $n\geq1$.
\end{lem}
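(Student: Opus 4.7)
The plan is to proceed by induction on $n$, using the classical interlacing sign argument for differential-type recurrences. By the inductive hypothesis, $P_{n-1}(x)$ has only real zeros, and since the coefficients of $P_{n-1}$ are nonnegative those zeros are nonpositive; the strict interlacing with the zeros of $P_{n-2}$ that is produced at the previous inductive step guarantees that they are all simple. Write them as $0 \geq r_1 > r_2 > \cdots > r_m$, where $m = \deg P_{n-1}$.

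The decisive identity is obtained by evaluating the recurrence at $x = r_i$:
\begin{equation*}
P_n(r_i) \;=\; r_i(c_n r_i + d_n)\,P'_{n-1}(r_i).
\end{equation*}
Under the hypotheses $c_n \leq 0$ and $d_n \geq 0$, for every $r_i \leq 0$ one has $c_n r_i \geq 0$, so $c_n r_i + d_n \geq 0$, and hence the prefactor $r_i(c_n r_i + d_n)$ is nonpositive. At consecutive simple real zeros of $P_{n-1}$, the values $P'_{n-1}(r_i)$ strictly alternate in sign. In the generic case where $r_i < 0$ and $c_n r_i + d_n > 0$, the values $P_n(r_i)$ therefore also alternate strictly in sign.

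An application of the intermediate value theorem then produces $m-1$ real zeros of $P_n$, one strictly between each consecutive pair $r_{i+1} < r_i$. For the remaining zero(s), I would study the endpoint behaviour: the leading coefficient of $P_n$ can be read off directly from the recurrence together with the hypothesis $\deg P_n - \deg P_{n-1} \in \{0,1\}$, and comparing its sign at $-\infty$ with the sign of $P_n(r_m)$ supplies the leftmost missing zero; meanwhile $P_n(0) = b_n P_{n-1}(0)$, compared with the sign of $P_n(r_1)$, supplies a zero in $(r_1,0]$ in the case $\deg P_n = m+1$. This exhibits all $\deg P_n$ zeros, proves they are real, and yields the interlacing with the $r_i$ as a by-product.

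The main obstacle is the degenerate boundary situations in which the strict sign alternation collapses, for instance $r_1 = 0$, $d_n = 0$, or $c_n r_i + d_n = 0$ for some index; in these situations a zero of $P_n$ may coincide with a zero of $P_{n-1}$ and the naive counting breaks down. The cleanest remedy is a perturbation argument: replace $(c_n, d_n)$ by $(c_n - \varepsilon,\, d_n + \varepsilon)$ with $\varepsilon \to 0^+$, so that the hypotheses hold strictly and the alternating sign argument runs unobstructed, and then invoke Hurwitz's theorem, which guarantees that the real zeros of a convergent sequence of real-rooted polynomials stay real in the limit, to recover both real-rootedness and weak interlacing for the original data.
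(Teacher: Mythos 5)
This lemma is not proved in the paper at all: it is imported verbatim as a special case of Corollary 2.4 of Liu and Wang \cite{LW-RZP}, so there is no in-paper argument to compare yours against. Judged on its own, your sketch follows the standard Liu--Wang sign-alternation strategy, and its core is sound: the zeros $r_i$ of $P_{n-1}$ are nonpositive because the coefficients are nonnegative, the identity $P_n(r_i)=r_i(c_nr_i+d_n)P'_{n-1}(r_i)$ together with $c_n\le 0$, $d_n\ge 0$ makes the prefactor nonpositive, and in the generic case the intermediate value theorem plus the endpoint analysis accounts for all $\deg P_n$ zeros. (One small slip: in the equal-degree case $\deg P_n=\deg P_{n-1}=m$ the extra zero comes from the interval $(r_1,0]$, not from the left end, since the nonnegativity of the coefficients forces the leading coefficient of $P_n$ to be positive and hence $P_n(-\infty)$ and $P_n(r_m)$ to have the same sign $(-1)^m$.)

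The genuine gap is in your treatment of the degenerate cases, and it undermines the induction. First, your induction hypothesis needs the zeros of $P_{n-1}$ to be \emph{simple} (so that $P'_{n-1}(r_i)$ alternates strictly), and you justify this by the \emph{strict} interlacing produced at step $n-1$; but your own remedy for degeneracies delivers only \emph{weak} interlacing via Hurwitz, so in a degenerate instance $P_{n-1}$ may have a multiple zero and the argument you run at step $n$ no longer applies. The induction as structured does not reproduce the hypothesis it consumes. Second, the specific perturbation $(c_n,d_n)\mapsto(c_n-\varepsilon,d_n+\varepsilon)$ can destroy the standing hypotheses that your counting relies on: for instance, in the equal-degree case one has $a_n+mc_n=0$, and the perturbed recurrence produces a polynomial of degree $m+1$ with \emph{negative} leading coefficient $-m\varepsilon\cdot[x^m]P_{n-1}$, for which your sign count locates only $m$ of the required $m+1$ real zeros --- so the perturbed polynomial need not be real-rooted at all, and Hurwitz has nothing to act on. Moreover the perturbation at step $n$ alone does nothing about multiple zeros of $P_{n-1}$ inherited from degeneracies at earlier steps; a global perturbation of all parameters would be needed, with a check that nonnegativity of coefficients and the degree condition survive. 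The standard repair (and the route Liu and Wang actually take) is not to perturb but to work with an interlacing relation $\preceq$ that tolerates common and multiple zeros, extracting common factors $x-r_i$ whenever $r_i(c_nr_i+d_n)=0$ and counting with multiplicity. As it stands, your proposal proves the lemma only in the generic (strict) case.
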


By Remark \ref{rem+T+A}, we will explore other properties of
$A_{n,k}$ and apply to those of $\mathcal {T}_{n,k}$.

\section{Extensions of properties of Eulerian numbers}
In this section, we will generalize some properties of Eulerian
numbers.
\subsection{A generalized Frobenius formula}
In what follows we will generalize the famous Frobenius formula
(\ref{relation+stirling+eulerian}) and (\ref{formu+eulerian}). Let
$F_{n,i}=\left\{
  \begin{array}{ccccc}
    n \\
   i\\
  \end{array}
\right\}i!$ for $n,i\geq0$. It is known that the triangle
$[F_{n,k}]_{n,k\geq0}$ satisfies the recurrence relation
\begin{eqnarray}\label{rel+Frob}
F_{n,k}=kF_{n-1,k}+kF_{n-1,k-1}
\end{eqnarray}
with $F_{0,0}=1$. In convenience, we define a {\it generalized
Frobenius triangle} $[\mathcal {F}_{n,k}]_{n,k}$ by
\begin{eqnarray*}
\mathcal {F}_{n,k}&=&(a_1k+a_2)\mathcal
{F}_{n-1,k}+(b_1k+b_2)\mathcal {F}_{n-1,k-1},
\end{eqnarray*}
with $\mathcal {F}_{0,0}=1$ and $\mathcal{F}_{n,k}=0$ unless $0\le
k\le n$.

As a generalization of (\ref{formu+stirling}), we have the next
result.
\begin{thm}\label{thm+Stirling+family}
Let $\{a_2,b_2\}\subseteq \mathbb{R}$ and $\{a_1,b_1\}\subseteq
\mathbb{R^{+}}$. Then we have
 the explicit formula
\begin{eqnarray*}
\mathcal
{F}_{n,k}=\binom{\frac{b_2}{b_1}+k}{k}\left(\frac{b_1}{a_1}\right)^k\sum_{j=0}^k\binom{k}{j}(-1)^{k-j}(a_2+a_1j)^n
\end{eqnarray*}
for $n,k\geq0$.
\end{thm}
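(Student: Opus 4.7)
The plan is to prove the identity by induction on $n$, guided directly by the defining two-term recurrence of $\mathcal{F}_{n,k}$. Write $\Phi(n,k)$ for the proposed closed form and introduce the auxiliary sum
$$S_n(k)=\sum_{j=0}^k\binom{k}{j}(-1)^{k-j}(a_2+a_1 j)^n,$$
so that $\Phi(n,k)=\binom{b_2/b_1+k}{k}(b_1/a_1)^k S_n(k)$. Since $S_n(k)$ is (up to the factor $a_1^k$) the $k$-th forward difference of the polynomial $t\mapsto(a_2+a_1 t)^n$ evaluated at $t=0$, one immediately has $S_n(k)=0$ whenever $k>n$ and $S_0(0)=1$, so $\Phi$ satisfies the same boundary/initial data as $\mathcal{F}$.

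The heart of the argument is the auxiliary identity
$$S_n(k)=(a_1 k+a_2)\,S_{n-1}(k)+a_1 k\,S_{n-1}(k-1).$$
I would prove this by factoring $(a_2+a_1 j)^n=(a_2+a_1 j)(a_2+a_1 j)^{n-1}$ and splitting $a_2+a_1 j=(a_1 k+a_2)-a_1(k-j)$. The first piece immediately contributes $(a_1 k+a_2)\,S_{n-1}(k)$; the residual piece $-a_1\sum_j\binom{k}{j}(-1)^{k-j}(k-j)(a_2+a_1 j)^{n-1}$ is handled by the elementary combinatorial identity $(k-j)\binom{k}{j}=k\binom{k-1}{j}$ followed by an index shift, which reduces it to $a_1 k\,S_{n-1}(k-1)$.

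To translate the $S_n(k)$ recurrence into the required recurrence for $\Phi$, I then multiply through by the normalizing constant. A short computation using $\binom{b_2/b_1+k}{k}/\binom{b_2/b_1+k-1}{k-1}=(b_2/b_1+k)/k$ gives
$$\frac{\binom{b_2/b_1+k}{k}(b_1/a_1)^k}{\binom{b_2/b_1+k-1}{k-1}(b_1/a_1)^{k-1}}=\frac{b_2+k b_1}{k a_1},$$
so that the term $a_1 k\,S_{n-1}(k-1)$ is promoted to exactly $(b_1 k+b_2)\,\Phi(n-1,k-1)$, while $(a_1 k+a_2)\,S_{n-1}(k)$ is promoted to $(a_1 k+a_2)\,\Phi(n-1,k)$. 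Hence $\Phi(n,k)$ satisfies the same recurrence and the same initial condition as $\mathcal{F}_{n,k}$, and induction on $n$ finishes the proof.

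The computation itself is essentially routine; the only real subtlety is bookkeeping around the generalized binomial coefficient $\binom{b_2/b_1+k}{k}$, which must be interpreted as the rational function $\prod_{j=1}^{k}(b_2/b_1+j)/k!$ of the real parameter $b_2/b_1$. This is well defined because $b_1>0$, and in fact no positivity hypothesis on the remaining parameters is used in the derivation itself.
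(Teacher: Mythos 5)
Your proof is correct, but it takes a genuinely different route from the paper. The paper introduces the exponential generating function $F(q,t)=\sum_{n,k}\mathcal{F}_{n,k}q^k t^n/n!$, derives from the recurrence the first-order PDE $F_t-(b_1q+a_1)qF_q=[a_2+(b_1+b_2)q]F$, solves it in closed form as $e^{a_2t}\bigl[1+\tfrac{b_1q(1-e^{a_1t})}{a_1}\bigr]^{-(1+b_2/b_1)}$, and then extracts coefficients by binomial expansion. You instead verify the closed form directly by induction on $n$: your auxiliary identity $S_n(k)=(a_1k+a_2)S_{n-1}(k)+a_1k\,S_{n-1}(k-1)$ (a finite-difference computation, which I checked and which is right) together with the ratio $C_k/C_{k-1}=(b_2+kb_1)/(ka_1)$ of the normalizing constants shows that $\Phi(n,k)$ satisfies exactly the recurrence and boundary data of $\mathcal{F}_{n,k}$; the vanishing $S_n(k)=0$ for $k>n$ via the degree of the polynomial $(a_2+a_1t)^n$ correctly handles the triangularity condition. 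Your argument is more elementary and self-contained — it requires neither solving nor verifying a PDE, and it makes transparent exactly where each hypothesis is used ($a_1,b_1\neq 0$ only). What the paper's approach buys in exchange is the explicit generating function (\ref{EGF+F}) itself, which is reused later (e.g., to obtain the exponential generating function of $\mathscr{T}_n(q)$ in the staircase-tableaux application), so your route would leave that formula still to be established separately.
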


\begin{proof}

Let the exponential generating function
$$F(q,t)=\sum_{n,k\geq0}\mathcal {F}_{n,k}q^k\frac{t^n}{n!}.$$
Then by the recurrence relation of $\mathcal {F}_{n,k}$, we have the
partial differential equation
$$F_t-(b_1q+a_1)qF_q=[a_2+(b_1+b_2)q]F$$
with the initial condition $F(q,0)=1$. Solve this equation and get
\begin{eqnarray}\label{EGF+F}
F(q,t)=e^{a_2t}\left[1+\frac{b_1q(1-e^{a_1t})}{a_1}\right]^{-(1+\frac{b_2}{b_1})}
\end{eqnarray}
(It is also routine to check that $F(q,t)$ is a solution of the
above equation with the initial condition). Then we have
\begin{eqnarray*}
 \sum_{n,k\geq0}
 \mathcal {F}_{n,k}q^k\frac{t^n}{n!}&=&e^{a_2t}\left[1+\frac{b_1q(1-e^{a_1t})}{a_1}\right]^{-(1+\frac{b_2}{b_1})}\\
 &=&e^{a_2t}\left[1-\frac{b_1q(e^{a_1t}-1)}{a_1}\right]^{-(1+\frac{b_2}{b_1})}\\
 &=&e^{a_2t}\sum_{i\geq0}\binom{\frac{b_2}{b_1}+i}{i}\left[\frac{b_1q(e^{a_1t}-1)}{a_1}\right]^i\\
 &=&e^{a_2t}\sum_{i\geq0}\binom{\frac{b_2}{b_1}+i}{i}\left(\frac{b_1q}{a_1}\right)^i\sum_{j=0}^i\binom{i}{j}(-1)^{i-j}e^{a_1jt}\\
 &=&\sum_{i\geq0}\binom{\frac{b_2}{b_1}+i}{i}\left(\frac{b_1q}{a_1}\right)^i\sum_{j=0}^i\binom{i}{j}(-1)^{i-j}e^{(a_2+a_1j)t}\\
 &=&\sum_{i\geq0}\binom{\frac{b_2}{b_1}+i}{i}\left(\frac{b_1q}{a_1}\right)^i\sum_{j=0}^i\binom{i}{j}(-1)^{i-j}\sum_{n\geq0}\frac{(a_2+a_1j)^nt^n}{n!}\\
 &=&\sum_{n,k\geq0}\sum_{j=0}^k\binom{\frac{b_2}{b_1}+k}{k}\left(\frac{b_1}{a_1}\right)^k\binom{k}{j}(-1)^{k-j}(a_2+a_1j)^nq^k\frac{t^n}{n!},
\end{eqnarray*}
which implies
\begin{eqnarray*}
 \mathcal {F}_{n,k}=\binom{\frac{b_2}{b_1}+k}{k}\left(\frac{b_1}{a_1}\right)^k\sum_{j=0}^k\binom{k}{j}(-1)^{k-j}(a_2+a_1j)^n.
\end{eqnarray*}
This completes the proof.
\end{proof}

By Theorem \ref{thm+m+tran} for $c=0$ and Theorem
\ref{thm+Stirling+family}, we get a generalized Frobenius formula as
follows. Obviously, it generalizes the identity
(\ref{relation+stirling+eulerian}) and formula
(\ref{formu+eulerian}).

\begin{thm}\label{thm+EF}
Let $\{a_1,b_1\}\subseteq \mathbb{R^{+}}$ and $\{a_2,b_2\}\subseteq
\mathbb{R^{\geq}}$. If an array $[\mathcal{D}_{n,k}]_{n,k}$
satisfies the recurrence relation:
\begin{eqnarray}\label{rec+EF}
\mathcal{D}_{n,k}&=&(a_1k+a_2)\mathcal{D}_{n-1,k}+(b_1n-b_1k+b_2)\mathcal{D}_{n-1,k-1},
\end{eqnarray}
where $\mathcal{D}_{0,0}=1$ and $\mathcal{D}_{n,k}=0$ unless $0\le
k\le n$, then there exists an array $[\mathcal
{F}_{n,k}]_{n,k\geq0}$ satisfying the recurrence relation
$$\mathcal {F}_{n,k}=(a_1k+a_2)\mathcal {F}_{n-1,k}+(b_1k+b_2-b_1+\frac{a_2}{a_1}b_1)\mathcal {F}_{n-1,k-1}$$
for $n\geq1$ with $\mathcal {F}_{0,0}=1$ such that the row
generating functions
$$\mathcal{D}_n(q)=(1-\frac{b_1}{a_1}q)^n\mathcal
{F}_n\left(\frac{q}{1-\frac{b_1}{a_1}q}\right).$$ In addition, we
have the explicit formula
\begin{eqnarray*}
\mathcal{D}_{n,k}&=&\sum_{i\geq0}\mathcal
{F}_{n,i}\binom{n-i}{k-i}(-\frac{b_1}{a_1})^{k-i},\\
 \mathcal
{F}_{n,i}&=&\sum_{j\geq0}\binom{\frac{a_2}{a_1}+\frac{b_2}{b_1}-1+i}{i}\binom{i}{j}\left(\frac{b_1}{a_1}\right)^{i}(-1)^{i-j}(a_2+a_1j)^n.
\end{eqnarray*}
\end{thm}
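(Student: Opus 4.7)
The plan is to present Theorem~\ref{thm+EF} as a clean consequence of Theorem~\ref{thm+m+tran} combined with Theorem~\ref{thm+Stirling+family}. First I would match the recurrence (\ref{rec+EF}) to the general form (\ref{Recurece+T2}) by taking (using temporary names for the parameters of (\ref{Recurece+T2})) $\lambda=1$, $a_0=0$, $a_1=a_1$, $a_2=a_2$, $c=0$, $b_0=b_1$, $b_1^{(\ref{Recurece+T2})}=-b_1$, $b_2=b_2$. The ternary term of (\ref{Recurece+T2}) then vanishes because $c=0$, which reproduces the two-term form of (\ref{rec+EF}).

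Next I would verify the hypothesis of Theorem~\ref{thm+m+tran}, namely $b_1^{(\ref{Recurece+T2})}=da_1-c$, which under the above identification forces $d=-b_1/a_1$. Inserting $\lambda=1$, $c=0$, $d=-b_1/a_1$, $a_0=0$ into the recurrence (\ref{rec+A+tran}) of the auxiliary array $[A_{n,k}]$ gives: coefficient of $A_{n-1,k}$ equal to $a_1k+a_2$; the coefficient of $n$ in front of $A_{n-1,k-1}$ is $b_0+d(a_1-a_0)=b_1-b_1=0$; the coefficient of $k$ is $-(c+da_1)=b_1$; and the constant term is $b_2+d(a_1-a_2)=b_2-b_1+\frac{a_2}{a_1}b_1$. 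So $A_{n,k}$ satisfies precisely the recurrence that the theorem attributes to $\mathcal{F}_{n,k}$, and setting $\mathcal{F}=A$ is legitimate. The transformation identity $\mathcal{D}_n(q)=(1-\frac{b_1}{a_1}q)^n\mathcal{F}_n\!\left(\frac{q}{1-\frac{b_1}{a_1}q}\right)$ and the sum $\mathcal{D}_{n,k}=\sum_i\mathcal{F}_{n,i}\binom{n-i}{k-i}(-b_1/a_1)^{k-i}$ then follow directly from the two formulas in the conclusion of Theorem~\ref{thm+m+tran}.

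To obtain the explicit formula for $\mathcal{F}_{n,i}$, I would apply Theorem~\ref{thm+Stirling+family} to the generalized Frobenius triangle with parameters $\tilde a_1=a_1$, $\tilde a_2=a_2$, $\tilde b_1=b_1$, $\tilde b_2=b_2-b_1+\frac{a_2}{a_1}b_1$. A short computation gives $\tilde b_2/\tilde b_1=\frac{b_2}{b_1}+\frac{a_2}{a_1}-1$, so the binomial prefactor of that formula becomes $\binom{\frac{a_2}{a_1}+\frac{b_2}{b_1}-1+i}{i}$, and the power $(\tilde b_1/\tilde a_1)^i=(b_1/a_1)^i$ and the exponent $\tilde a_2+\tilde a_1 j=a_2+a_1j$ both match the form stated in the theorem.

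The only slightly subtle point is that Theorem~\ref{thm+m+tran} was set up in the paper under the standing assumption $d\ge 0$, whereas here we must take $d=-b_1/a_1<0$. The main obstacle to address in writing up is therefore a brief justification that the algebraic content of Theorem~\ref{thm+m+tran}, namely the functional identity (\ref{rel+T+A}) and the coefficient extraction $\mathcal{T}_{n,k}=\sum_i A_{n,i}\binom{n-i}{k-i}\lambda^{n-k}d^{k-i}$, is proved by purely formal manipulations of the recurrences and does not use the sign of $d$ anywhere; the nonnegativity of $d$ was needed only for the analytic/positivity consequences in Proposition~\ref{prop+m+tran}. With that remark in place, the derivation above yields all four assertions of Theorem~\ref{thm+EF} in order.
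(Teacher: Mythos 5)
Your proposal is correct and follows exactly the route the paper intends: the paper gives no written proof of Theorem~\ref{thm+EF} beyond the remark that it follows from Theorem~\ref{thm+m+tran} with $c=0$ together with Theorem~\ref{thm+Stirling+family}, and your parameter matching ($\lambda=1$, $a_0=0$, $c=0$, $d=-b_1/a_1$) carries that out in full. Your observation that $d=-b_1/a_1<0$ falls outside the paper's standing sign convention but is harmless because the proof of Theorem~\ref{thm+m+tran} is purely formal is a legitimate and worthwhile clarification of a point the paper glosses over.
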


\subsection{A generalization of $\gamma$ positivity of Eulerian
polynomials} In this subsection, we will generalize
(\ref{relation+gamma+Eulerian}). In order to do so, we define a
generalized Eulerian triangle $[\mathcal {E}_{n,k}]_{n,k}$ by
\begin{eqnarray}\label{rec+SEF}
\mathcal{E}_{n,k}&=&(a_1k+a_2)\mathcal{E}_{n-1,k}+(a_1n-a_1k+a_2)\mathcal{E}_{n-1,k-1},
\end{eqnarray}
with $\mathcal {E}_{0,0}=1$ and $\mathcal {E}_{n,k}=0$ unless $0\le
k\le n$. Its row-generating function has the following
$\gamma$-positivity decomposition.

\begin{thm}\label{thm+SEF}
Let $\{a_1,a_2\}\subseteq \mathbb{R^{+}}$ and the generalized
Eulerian triangle $[\mathcal{E}_{n,k}]_{n,k}$ is defined in
(\ref{rec+SEF}). Then there exists a nonnegative triangle $[\mathcal
{S}_{n,k}]_{n,k}$ satisfying the recurrence relation
\begin{equation}\label{S+recurrence relation}
\mathcal {S}_{n,k}=(a_1k+a_2)\mathcal {S}_{n-1,k}+c(n-2k+1)\mathcal
{S}_{n-1,k-1}
\end{equation}
with $k\leq \frac{n+1}{2}$ and $\mathcal {S}_{0,0}=1$ such that
their row-generating functions
\begin{eqnarray*}
\mathcal {E}_n(x)=(1+x)^n\mathcal {S}_n\left(\frac{\frac{2a_1}{c}
x}{(1+x)^{2}}\right)
\end{eqnarray*}
for $n\geq1$, where $c>0$.
\end{thm}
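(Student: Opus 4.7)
The plan is to prove the theorem by induction on $n$, converting both recurrences into differential-operator identities for the row-generating functions and exploiting the substitution $y=\frac{2a_1 x}{c(1+x)^2}$, which is invariant under $x\mapsto 1/x$ (the standard hallmark of a $\gamma$-positivity decomposition). Multiplying the recurrence \eqref{rec+SEF} by $x^k$ and summing yields
$$\mathcal{E}_n(x)=\bigl[a_2+(a_1(n-1)+a_2)x\bigr]\mathcal{E}_{n-1}(x)+a_1 x(1-x)\,\mathcal{E}_{n-1}'(x),$$
while the analogous manipulation of \eqref{S+recurrence relation} gives
$$\mathcal{S}_n(y)=[a_2+c(n-1)y]\,\mathcal{S}_{n-1}(y)+(a_1 y-2c y^2)\,\mathcal{S}_{n-1}'(y).$$
I define $\mathcal{S}_{n,k}$ directly by its recurrence; the truncation $k\le(n+1)/2$ is exactly the range in which $n-2k+1\ge 0$, so both coefficients in \eqref{S+recurrence relation} are nonnegative and $\mathcal{S}_{n,k}\ge 0$ is immediate by induction from $\mathcal{S}_{0,0}=1$.

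Next I would record $\frac{dy}{dx}=\frac{2a_1(1-x)}{c(1+x)^3}$ and verify the two algebraic identities that drive the induction. The first,
$$a_1 x(1-x)\,\frac{dy}{dx}=(1+x)\bigl(a_1 y-2c y^2\bigr),$$
holds because both sides reduce to $\frac{2a_1^{2}\,x(1-x)^{2}}{c(1+x)^{3}}$. The second,
$$\bigl(a_2+(a_1(n-1)+a_2)x\bigr)(1+x)+a_1(n-1)x(1-x)=(1+x)^2\bigl(a_2+c(n-1)y\bigr),$$
follows because the left side collapses to $a_2(1+x)^2+2a_1(n-1)x$, which is $(1+x)^2$ times the right-hand factor once the identity $c(n-1)y(1+x)^2=2a_1(n-1)x$ is used.

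With these identities in hand, the induction is short. The base case $n=0$ is $1=1$. Assuming $\mathcal{E}_{n-1}(x)=(1+x)^{n-1}\mathcal{S}_{n-1}(y)$ and differentiating gives
$$\mathcal{E}_{n-1}'(x)=(n-1)(1+x)^{n-2}\mathcal{S}_{n-1}(y)+(1+x)^{n-1}\mathcal{S}_{n-1}'(y)\,\frac{dy}{dx}.$$
Plugging this into the differential recurrence for $\mathcal{E}_n(x)$, factoring out $(1+x)^{n-2}$, and applying the two displayed identities in succession collapses the right-hand side to
$$(1+x)^n\bigl\{[a_2+c(n-1)y]\mathcal{S}_{n-1}(y)+(a_1 y-2c y^2)\mathcal{S}_{n-1}'(y)\bigr\}=(1+x)^n\mathcal{S}_n(y),$$
closing the induction.

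The principal obstacle is locating the key identity $a_1 x(1-x)\,dy/dx=(1+x)(a_1 y-2c y^2)$, the rational-function miracle that makes the substitution work; once this and the companion affine collapse are in hand, the rest is bookkeeping. A minor point worth noting is that the polynomial identity in $x$ legitimately determines $\mathcal{S}_n$ as a polynomial in $y$, which poses no difficulty since $\mathcal{E}_n(x)(1+x)^{-n}$ is by inductive construction already a polynomial in $y$, and the substitution $x\mapsto y$ is generically finite-to-one so that polynomial equality in $x$ pulls back to polynomial equality in $y$.
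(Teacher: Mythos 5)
Your proposal is correct and follows essentially the same route as the paper: induction on $n$, passing to the differential recurrences for $\mathcal{E}_n$ and $\mathcal{S}_n$, differentiating the inductive hypothesis, and verifying the algebraic collapse under the substitution $y=\frac{2a_1x}{c(1+x)^2}$ (the paper writes this as $\lambda x/(1+x)^2$ with $\lambda=2a_1/c$). Your two isolated "key identities" are exactly the computations the paper carries out inline, and your reading $a_1x(1-x)\mathcal{E}_{n-1}'(x)$ of the derivative term is the correct one (the paper's displayed $a_1x(x-1)$ is a sign typo that it silently corrects in the subsequent lines).
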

\begin{proof}
We will prove this result by induction on $n$. Let $\lambda=2a_1/c$.
It is obvious for $n=0$. For $n=1$, $\mathcal {E}_1(x)=a_2+a_2x$ and
$\mathcal {S}_1(x)=a_2$. Therefore we have
$$\mathcal{E}_1(x)=(1+x)\mathcal {S}_1\left(\frac{\lambda x}{(1+x)^{2}}\right).$$ In the following we
assume $n\geq2$. By the inductive hypothesis, we have
\begin{eqnarray}
\mathcal{E}'_{n-1}(x)=(n-1)(1+x)^{n-2}\mathcal
{S}_{n-1}\left(\frac{\lambda
x}{(1+x)^{2}}\right)+\frac{(1+x)^{n-1}\lambda
(1-x)}{(1+x)^3}\mathcal {S}'_{n-1}\left(\frac{\lambda
x}{(1+x)^{2}}\right).
\end{eqnarray}
On the other hand, by the recurrence relations (\ref{S+recurrence
relation}) and (\ref{rec+SEF}), we have
$$\mathcal {S}_n(x)=[a_2+\frac{2a_1(n-1)x}{\lambda}]\mathcal {S}_{n-1}(x)+x(a_1-\frac{4a_1}{\lambda}x)\mathcal {S}'_{n-1}(x)$$
and
$$\mathcal{E}_n(x)=[a_2+x(a_1n-a_1+a_2)]\mathcal{E}_{n-1}(x)+a_1x(x-1)\mathcal{E}'_{n-1}(x).$$
Then we derive that
\begin{eqnarray*}
\mathcal{E}_n(x) &=&[a_2(x+1)+a_1x(n-1)](1+x)^{n-1}\mathcal
{S}_{n-1}\left(\frac{\lambda
x}{(1+x)^{2}}\right)+a_1x(x-1)\\
&&\times\left[(n-1)(1+x)^{n-2}\mathcal {S}_{n-1}\left(\frac{\lambda
x}{(1+x)^{2}}\right)+\frac{(1+x)^{n-1}\lambda
(1-x)}{(1+x)^3}\mathcal {S}'_{n-1}\left(\frac{\lambda x}{(1+x)^{2}}\right)\right]\\
&=&(1+x)^n\left[a_2+\frac{a_1(n-1)x}{1+x}\right]\mathcal
{S}_{n-1}\left(\frac{\lambda
x}{(1+x)^{2}}\right)+a_1x(1-x)(1+x)^n\\
&&\times\left[\frac{(n-1)}{(1+x)^{2}}\mathcal
{S}_{n-1}\left(\frac{\lambda x}{(1+x)^{2}}\right)+\frac{\lambda
(1-x)}{(1+x)^4}S'_{n-1}\left(\frac{\lambda
x}{(1+x)^{2}}\right)\right]\\
&=&(1+x)^n\left[a_2+\frac{2a_1(n-1)}{\lambda}\frac{\lambda
x}{(1+x)^2}\right]\mathcal {S}_{n-1}\left(\frac{\lambda
x}{(1+x)^{2}}\right)+(1+x)^n\times\frac{a_1}{\lambda}\times
\frac{\lambda
x}{(1+x)^{2}}\times\\
&&\left[\lambda-4\frac{\lambda x}{(1+x)^{2}}\right]\times \mathcal
{S}'_{n-1}\left(\frac{\lambda
x}{(1+x)^{2}}\right)\\
&=&(1+x)^n\mathcal {S}_{n}\left(\frac{\lambda x}{(1+x)^{2}}\right).
\end{eqnarray*}
Finally, obviously, the nonnegativity of $[\mathcal
{S}_{n,k}]_{n,k}$ follows from its recurrence relation
(\ref{S+recurrence relation}). The proof is complete.
\end{proof}

\begin{rem}
By recurrence relations (\ref{rec+shift+Eulerian}) and
(\ref{rec-Euler+B}), respectively, it is clear that Theorem
\ref{thm+SEF} gives a unified proof for $\gamma$ positivity of
Eulerian polynomials of two kinds.
\end{rem}

\subsection{An identity with the general derivative
polynomial}

In this subsection, we will prove a formula of the generalized
Eulerian polynomial $\mathcal {E}_n(x)$ in terms of the general
derivative polynomial.

Let $x=\tan{t}$ and $y=\sec{t}$. Let $D_t$ denote the differential
operator. Obviously, $D_t(x)=y^2$ and $D_t(y)=xy$. For
$\delta\geq1$, define
\begin{eqnarray}\label{def+Q}
\frac{d^n}{dt^n} y^\delta=Q_n(\delta,t)y^\delta.
\end{eqnarray}
 The $Q_n(\delta,t)$ is called {\it the general
derivative polynomial} in $t$ in \cite{V14} and satisfies the
recurrence relation
$$Q_{n+1}(\delta,t)=(1+t^2)D_tQ_n(\delta,t)+\delta tQ_n(\delta,t)$$ with $Q_0(\delta,t)=1$.
Obviously, if $deg(Q_n(\delta,x))$ is even (resp., odd) then so is
the degree of each term of $Q_n(\delta,t)$. Thus we can assume that
$$Q_n(\delta,t):=\sum_{k=0}^{\lfloor \frac{n}{2}\rfloor}Q_{n,k}(\delta)t^{n-2k}.$$
It is also known in \cite{V14} that the exponential generating
function
\begin{eqnarray*}
\sum_{n\geq0}Q_n(\delta,x)\frac{z^n}{n!}=\frac{1}{(\cos z-x \sin
z)^\delta}.
\end{eqnarray*} In addition, $Q_n(1,1)=S_n$,
where $S_n$ is the Springer number which is a type $B$ analog of the
Euler number for the group of signed permutation and counts also the
number of Weyl chambers in the principal Springer cone of the
Coxeter group $B_n$ of symmetries of an $n$ dimensional cube, see
\cite[A001586]{Slo}.

In order to get a connection between the generalized Eulerian
polynomial $\mathcal {E}_n(x)$ and the general derivative polynomial
$Q_n(\delta,x)$, we prove another identity for $Q_n(\delta,x)$ as
follows.
\begin{thm}\label{thm+Andr}
Let $\{a,b,c\}\subseteq \mathbb{R^{+}}$. Assume that $[\mathcal
{S}_{n,k}]_{n,k\geq0}$ is an array satisfying the recurrence
relation:
\begin{equation}\label{recurrence relation}
\mathcal {S}_{n,k}=(a\,k+b)\mathcal {S}_{n-1,k}+c(n-2k+1)\mathcal
{S}_{n-1,k-1}
\end{equation}
with $\mathcal {S}_{0,0}=1$ and $\mathcal {S}_{n,k}=0$ unless $0\le
k\le (n+1)/2$. Then for $n\geq0$, we have their row-generating
functions
$$\mathcal {S}_n(q)=\left(\frac{a}{2}\right)^n\left(\frac{2c}{a}q-1\right)^{n/2}Q_n\left(\frac{2b}{a},\frac{1}{\sqrt{\frac{2c}{a}q-1}}\right).$$
\end{thm}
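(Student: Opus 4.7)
The plan is to prove the identity by induction on $n$, first translating the three-term recurrence for $\mathcal{S}_{n,k}$ into a single linear first-order differential recurrence for the row-generating function $\mathcal{S}_n(q)$, and then matching it against the defining recurrence $Q_n(\delta,t)=(1+t^2)D_tQ_{n-1}(\delta,t)+\delta t\,Q_{n-1}(\delta,t)$ of the general derivative polynomial, after an appropriate change of variable.

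Multiplying (\ref{recurrence relation}) by $q^k$ and summing, the identities $\sum_k k\mathcal{S}_{n-1,k}q^k = q\mathcal{S}'_{n-1}(q)$ and (after the shift $j=k-1$) $\sum_k (n-2k+1)\mathcal{S}_{n-1,k-1}q^k = q[(n-1)\mathcal{S}_{n-1}(q)-2q\mathcal{S}'_{n-1}(q)]$ give
\begin{equation*}
\mathcal{S}_n(q) = [b+c(n-1)q]\,\mathcal{S}_{n-1}(q) + q(a-2cq)\,\mathcal{S}'_{n-1}(q).
\end{equation*}
Setting $\delta:=2b/a$ and introducing $t:=1/\sqrt{(2c/a)q-1}$, equivalently $q = a(t^2+1)/(2ct^2)$, one has $dt/dq = -(c/a)t^3$ and $((2c/a)q-1)^{n/2}=t^{-n}$, so the target identity collapses to the clean form $\mathcal{S}_n(q) = (a/2)^n t^{-n} Q_n(\delta,t)$.

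The base case $n=0$ is immediate. For the induction step, the chain rule yields
\begin{equation*}
\mathcal{S}'_{n-1}(q) = (a/2)^{n-1}(c/a)\bigl[(n-1)t^{3-n}Q_{n-1}(\delta,t) - t^{4-n}D_tQ_{n-1}(\delta,t)\bigr].
\end{equation*}
Substituting this and the inductive hypothesis into the differential recurrence, together with the identities $b+c(n-1)q = \frac{a}{2t^2}[\delta t^2+(n-1)(t^2+1)]$ and $q(a-2cq) = -\frac{a^2(t^2+1)}{2ct^4}$, I expect the $(n-1)(t^2+1)$-term from the coefficient of $\mathcal{S}_{n-1}$ to cancel exactly against the $(n-1)$-term produced when $D_t$ hits $t^{-(n-1)}$ in $\mathcal{S}'_{n-1}$. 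What remains is $(a/2)^n t^{-n}\bigl[(1+t^2)D_tQ_{n-1}(\delta,t)+\delta t\,Q_{n-1}(\delta,t)\bigr]$, which equals $(a/2)^n t^{-n}Q_n(\delta,t)$ by the defining recurrence of $Q_n$.

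The only delicate step is the algebraic bookkeeping of powers of $t$ that produces the crucial cancellation between the two $(n-1)$ contributions; once this cancellation is confirmed, the surviving two terms automatically realise the operator $(1+t^2)D_t+\delta t$ applied to $Q_{n-1}(\delta,t)$, closing the induction and proving the theorem.
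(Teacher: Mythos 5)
Your proposal is correct, and I verified the key computation: with $t=1/\sqrt{(2c/a)q-1}$ one gets $b+c(n-1)q=\frac{a}{2t^2}[\delta t^2+(n-1)(t^2+1)]$ and $q(a-2cq)=-\frac{a^2(t^2+1)}{2ct^4}$, and substituting the inductive hypothesis and your expression for $\mathcal{S}'_{n-1}(q)$ into $\mathcal{S}_n(q)=[b+c(n-1)q]\mathcal{S}_{n-1}(q)+q(a-2cq)\mathcal{S}'_{n-1}(q)$ does make the two $(n-1)(t^2+1)t^{-n-1}Q_{n-1}$ terms cancel, leaving exactly $(a/2)^n t^{-n}[(1+t^2)D_tQ_{n-1}+\delta tQ_{n-1}]=(a/2)^n t^{-n}Q_n(\delta,t)$. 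Your route is genuinely different from the paper's: the paper first rescales $\mathcal{S}_{n,k}=(a/2)^{n-k}c^k S^\circ_{n,k}$ to a normalized recurrence, then identifies $S^\circ_{n,k}$ with the coefficients $S^\star_{n,k}$ in the monomial expansion $\frac{d^n}{dt^n}\sec^\delta t=\sec^\delta t\sum_k S^\star_{n,k}\tan^{n-2k}t\,\sec^{2k}t$ (showing both arrays satisfy the same two-term recurrence), and finally converts $x^nS^\star_n(y^2/x^2)=Q_n(\delta,x)$ into the stated formula via $y^2=1+x^2$; this buys a structural interpretation of the entries $\mathcal{S}_{n,k}$, whereas your direct generating-function induction is more self-contained and avoids introducing the auxiliary arrays. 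One small point you should add, which the paper does address: your verification takes place for $q>a/(2c)$, where $t$ is real, and the right-hand side a priori involves $\sqrt{(2c/a)q-1}$; you need the observation that all terms of $Q_n(\delta,t)$ have degree of the same parity as $n$, so that $\left(\frac{2c}{a}q-1\right)^{n/2}Q_n\left(\delta,\frac{1}{\sqrt{(2c/a)q-1}}\right)$ is in fact a polynomial in $q$, whence agreement with the polynomial $\mathcal{S}_n(q)$ on an interval gives the identity for all $q$.
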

\begin{proof}
Suppose that an array $[S^\circ_{n,k}]_{n,k\geq0}$ satisfies the
recurrence relation
\begin{equation*}
S^\circ_{n,k}=(2k+\frac{2b}{a})S^\circ_{n-1,k}+(n-2k+1)S^\circ_{n-1,k-1}
\end{equation*}
with  $S^\circ_{0,0}=1$ and $S^\circ_{n,k}=0$ unless $0\le k\le
\frac{n+1}{2}$. It is clear for $n,k\geq0$ that $$\mathcal
{S}_{n,k}=\left(\frac{a}{2}\right)^{n-k}c^kS^\circ_{n,k},$$ which
implies
$$\mathcal {S}_n(q)=\left(\frac{a}{2}\right)^nS^\circ_n(\frac{2cq}{a}).$$ Thus,
in order to show $\mathcal
{S}_n(q)=\left(\frac{a}{2}\right)^n(\frac{2c}{a}q-1)^{n/2}Q_n(\frac{2b}{a},\frac{1}{\sqrt{\frac{2c}{a}q-1}})$
for $n\geq0$, it suffices to prove
$$S^\circ_n(q)=(q-1)^{n/2}Q_n(\delta,\frac{1}{\sqrt{q-1}})$$ with $\delta=\frac{2b}{a}$ for $n\geq0$.

Let $x=\tan{t}$ and $y=\sec{t}$. Then we have
\begin{eqnarray*}
\frac{d}{dt} y^{\delta}&=&\delta xy^{\delta},\\
\frac{d^2}{dt^2} y^\delta&=&(\delta^2x^2+\delta y^{2})y^\delta,\\
\frac{d^3}{dt^3}
y^\delta&=&[\delta^3x^3+(3\delta^2+2\delta)xy^{2}]y^\delta,\\
&\vdots&.
\end{eqnarray*}
Thus we can also assume that
\begin{eqnarray}
\frac{d^n}{dt^n} y^\delta=y^\delta\sum_{k=0}^{\lfloor
n/2\rfloor}S^\star_{n,k}x^{n-2k}y^{2k}.
\end{eqnarray}
By induction on $n$, we get that the array
$[S^\star_{n,k}]_{n,k\geq0}$ satisfies the recurrence relation
\begin{equation*}
S^\star_{n,k}=(2k+\delta)S^\star_{n-1,k}+(n-2k+1)S^\star_{n-1,k-1}
\end{equation*}
with $S^\star_{n,k}=0$ unless $0\le k\le \frac{n+1}{2}$ and
$S^\star_{0,0}=1$. Clearly, $S^\star_{n,k}=S^\circ_{n,k}$. Then
combining
$$x^nS^\star_n(y^2/x^2)=Q_n(\delta,x)$$
and the identity $1+x^2=y^2$, we have
$$S^\star_n(q)=(q-1)^{n/2}Q_n(\delta,\frac{1}{\sqrt{q-1}})$$
for $q>1$ and $n\geq0$. The general derivative polynomial
$Q_n(\delta,t)$ satisfies the recurrence relation
$$Q_{n+1}(\delta,t)=(1+t^2)D_tQ_n(\delta,t)+\delta tQ_n(\delta,t)$$ with $Q_0(\delta,t)=1$.
It is obvious that if $deg(Q_n(\delta,t))$ is even (resp., odd) then
so is the degree of each term of $Q_n(\delta,t)$. This implies that
$(q-1)^{n/2}Q_n(\delta,\frac{1}{\sqrt{q-1}})$ is a polynomial in
$q$. Thus we have polynomials
$$S^\star_n(q)=(q-1)^{n/2}Q_n(\delta,\frac{1}{\sqrt{q-1}})$$
for $n\geq0$. The proof is complete.
\end{proof}

\begin{thm}\label{thm+D+Euler}
Assume that the generalized Eulerian polynomial $\mathcal{E}_{n}(x)$
is defined in (\ref{rec+SEF}) and $Q_n(\delta,x)$ is the general
derivative polynomial. If $0<a_1\leq2a_2$, then we have
\begin{eqnarray}
\mathcal{E}_n(x)=\left(\frac{a_1\sqrt{-1}}{2}\right)^n(1-x)^nQ_n\left(\frac{2a_2}{a_1},\frac{\sqrt{-1}}{w^2}\right),
\end{eqnarray} where $w=\sqrt{\frac{1-x}{1+x}}$.
\end{thm}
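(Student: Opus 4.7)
The plan is to chain together the two transformations already in hand: Theorem~\ref{thm+SEF} rewrites $\mathcal{E}_n(x)$ in terms of the $\gamma$-triangle row-generating function $\mathcal{S}_n$, and Theorem~\ref{thm+Andr} rewrites $\mathcal{S}_n$ in terms of the general derivative polynomial $Q_n$. Composing the two identities and simplifying via $w=\sqrt{(1-x)/(1+x)}$ should produce the claimed formula.

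First, I would invoke Theorem~\ref{thm+SEF} with the given positive $a_1,a_2$ (choosing any $c>0$) to obtain
\begin{equation*}
\mathcal{E}_n(x)=(1+x)^n\,\mathcal{S}_n\!\left(\frac{(2a_1/c)\,x}{(1+x)^2}\right),
\end{equation*}
where $\mathcal{S}_n$ is the row-generating function of the triangle satisfying (\ref{S+recurrence relation}). I would then apply Theorem~\ref{thm+Andr} to this $\mathcal{S}_n$ with $a=a_1$, $b=a_2$, and the same $c$; the hypothesis $0<a_1\le 2a_2$ forces $\delta:=2a_2/a_1\ge 1$, which is the regime in which the general derivative polynomial $Q_n(\delta,t)$ of (\ref{def+Q}) is meaningful. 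The output is
\begin{equation*}
\mathcal{S}_n(q)=\left(\frac{a_1}{2}\right)^{\!n}\!\left(\tfrac{2c}{a_1}q-1\right)^{\!n/2}Q_n\!\left(\tfrac{2a_2}{a_1},\,\tfrac{1}{\sqrt{(2c/a_1)q-1}}\right).
\end{equation*}

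The central computation is substituting $q=\dfrac{2a_1 x}{c(1+x)^2}$. A direct simplification gives
\begin{equation*}
\tfrac{2c}{a_1}q-1=\frac{4x-(1+x)^2}{(1+x)^2}=-\frac{(1-x)^2}{(1+x)^2}=-w^4,
\end{equation*}
so the outer factor becomes $(a_1/2)^n(-w^4)^{n/2}$ while the argument of $Q_n$ becomes $(-w^4)^{-1/2}$. Multiplying by the $(1+x)^n$ in front and using $(1+x)^2 w^4=(1-x)^2$, the branch choice $(-1)^{1/2}=\sqrt{-1}$ promotes the outer factor to $\left(\tfrac{a_1\sqrt{-1}}{2}\right)^{\!n}(1-x)^n$ and the $Q_n$-argument to $\sqrt{-1}/w^2$, matching the target expression.

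The main obstacle will be making the formal square roots rigorous, since $-w^4$ is not a priori a square and $w^2$ is purely imaginary for $x>1$. The cleanest way to sidestep this is to expand $Q_n(\delta,t)=\sum_{k}Q_{n,k}(\delta)\,t^{n-2k}$ (recall from the paragraph following (\ref{def+Q}) that only terms whose degree matches the parity of $n$ appear). Then in every summand the two half-integer powers combine into the single integer power $(-w^4)^{n/2-(n-2k)/2}=(-w^4)^{k}=(-1)^k w^{4k}$, which is polynomial in $x$. Collecting terms yields the purely polynomial identity
\begin{equation*}
\mathcal{E}_n(x)=\left(\frac{a_1}{2}\right)^{\!n}\sum_{k}Q_{n,k}\!\left(\tfrac{2a_2}{a_1}\right)(-1)^{k}(1-x)^{2k}(1+x)^{n-2k},
\end{equation*}
which, using $(\sqrt{-1}\,)^{\,2n-2k}=(-1)^{n-k}$ on the right-hand side of the target formula, is seen to be equivalent to the stated closed form. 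This termwise verification finishes the proof free of branch ambiguities.
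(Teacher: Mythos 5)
Your plan is exactly the paper's: compose Theorem~\ref{thm+SEF} with Theorem~\ref{thm+Andr} and simplify under $q=\tfrac{2a_1x}{c(1+x)^2}$. The computation $\tfrac{2c}{a_1}q-1=-w^4$ and the identification of the outer factor with $\left(\tfrac{a_1\sqrt{-1}}{2}\right)^n(1-x)^n$ are correct, and the idea of expanding $Q_n(\delta,t)=\sum_kQ_{n,k}(\delta)t^{n-2k}$ termwise to avoid branch ambiguities is a genuine improvement in rigor over the paper, which simply writes down the composed formula.

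However, the last step contains a sign error that your own termwise expansion actually exposes. Since $1/\sqrt{-1}=-\sqrt{-1}$, the argument of $Q_n$ after substitution is $\bigl(-w^4\bigr)^{-1/2}=-\sqrt{-1}/w^2$, not $\sqrt{-1}/w^2$. Concretely, the polynomial identity you correctly derive,
\begin{equation*}
\mathcal{E}_n(x)=\left(\frac{a_1}{2}\right)^{\!n}\sum_{k}Q_{n,k}\!\left(\tfrac{2a_2}{a_1}\right)(-1)^{k}(1-x)^{2k}(1+x)^{n-2k},
\end{equation*}
does not match the stated closed form, whose termwise expansion carries $(\sqrt{-1}\,)^{2n-2k}=(-1)^{n-k}$ rather than $(-1)^k$; the two differ by the global factor $(-1)^n$, so your claimed ``equivalence'' fails for all odd $n$. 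A quick check: for $a_1=2$, $a_2=1$, $n=1$ one has $\mathcal{E}_1(x)=1+x$, while the stated right-hand side gives $\sqrt{-1}\,(1-x)\cdot\tfrac{2a_2}{a_1}\cdot\tfrac{\sqrt{-1}}{w^2}=-(1+x)$. What you have actually proved is the version with argument $\tfrac{1+x}{\sqrt{-1}(1-x)}=-\sqrt{-1}/w^2$, which is precisely the expression appearing in the final display of the paper's own proof; the theorem statement (and the remark following it) carries the sign typo. So the fix is not in your computation but in the last sentence: state the identity with $-\sqrt{-1}/w^2$ (equivalently, with $\left(-\tfrac{a_1\sqrt{-1}}{2}\right)^n$ in front), rather than asserting agreement with the formula as printed.
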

\begin{proof}
For $[\mathcal {E}_{n,k}]_{n,k\geq0}$, by Theorem \ref{thm+SEF},
there exists a nonnegative triangle $[\mathcal {S}_{n,k}]_{n,k}$
satisfying the recurrence relation
\begin{equation}\label{rec+SS}
\mathcal {S}_{n,k}=(a_1k+a_2)\mathcal {S}_{n-1,k}+c(n-2k+1)\mathcal
{S}_{n-1,k-1},
\end{equation}
where $\mathcal {S}_{0,0}=1$ and $\mathcal {S}_{n,k}=0$ unless $0\le
k\le (n+1)/2$ such that for $n\geq1$ their row-generating functions
\begin{eqnarray*}
\mathcal {E}_n(x)=(1+x)^n\mathcal {S}_n\left(\frac{\frac{2a_1}{c}
x}{(1+x)^{2}}\right).
\end{eqnarray*}

Then by Theorem \ref{thm+Andr}, we have its row-generating function
\begin{eqnarray}\label{rel+Q+S}
\mathcal
{S}_n(x)=\left(\frac{a_1}{2}\right)^n\left(\frac{2cx}{a_1}-1\right)^{\frac{n}{2}}Q_n\left(\frac{2a_2}{a_1},\frac{1}{\sqrt{\frac{2cx}{a_1}-1}}\right)
\end{eqnarray}
for $n\geq0$.

Thus we have
\begin{eqnarray}
\mathcal
{E}_n(x)=\left(\frac{a_1\sqrt{-1}}{2}\right)^n(1-x)^nQ_n\left(\frac{2a_2}{a_1},\frac{1+x}{\sqrt{-1}(1-x)}\right).
\end{eqnarray}
This proof is complete.
\end{proof}

\begin{rem}
For Eulerian polynomials of two kinds in Section $1$, by recurrence
relations (\ref{rec+shift+Eulerian}) and (\ref{rec-Euler+B}),
respectively, using Theorem \ref{thm+D+Euler} we immediately get
\begin{eqnarray}
\overrightarrow{E}_n(x)=\left(\frac{\sqrt{-1}}{2}\right)^n(1-x)^nQ_n\left(2,\frac{\sqrt{-1}}{w^2}\right),\\
\mathfrak{B}_n(x)=\left(\sqrt{-1}\right)^n(1-x)^nQ_n\left(1,\frac{\sqrt{-1}}{w^2}\right),
\end{eqnarray}
where $w=\sqrt{\frac{1-x}{1+x}}$.
\end{rem}

\subsection{A generalization of formula of David and Barton}
In what follows we will give a generalization of the formula
(\ref{relation+run+Eulerian}) of David and Barton. Let us recall
alternating-runs polynomials $R_n(x)$. For $\pi=\pi(1)\pi(2)\cdots
\pi(n)\in\msn$, we say that $\pi$ changes direction at position $i$
if either $\pi({i-1})<\pi(i)>\pi(i+1)$, or
$\pi(i-1)>\pi(i)<\pi(i+1)$. We say that $\pi$ has $k$ alternating
runs if there are $k-1$ indices $i$ such that $\pi$ changes
direction at these positions. Denote by $R(n, k)$ the number of
permutations in $S_n$ having $k$ alternating runs. Then we have
\begin{eqnarray}\label{relation+run}
R(n, k) = k\,R(n- 1, k)+2R(n-1, k -1) + (n- k)R(n-1, k-2)
\end{eqnarray}  for $n\geq2$ and $k\geq1$ where $R(2,1) = 2$ and $R(n,k) =
0$ unless for $1\leq k\leq n-1$, see Andr\'{e} \cite{An84}. For
$n\geq2$, we have the alternating-runs polynomial $R_n(x) =
\sum_{k=0}^{n}R(n, k)x^k$. Beginning with Andr\'{e} \cite{An84},
there are many interesting results related to alternating runs, see
B\'{o}na and Ehrenborg \cite{BE00}, Canfield and Wilf \cite{CW08},
David and Barton \cite{DB}, Stanley \cite{Sta08}, Zhu \cite{Zhu18E},
Zhu-Yeh-Lu \cite{ZYL}, Zhuang \cite{Zhuang16} for instance. If let
$\widetilde{R}(n,k)=R(n+2,k+1)/2$ for $n,k\geq0$, then the array
$[\widetilde{R}(n,k)]_{n,k}$ satisfies the recurrence relation
\begin{eqnarray}\label{relation+run+shift}
\widetilde{R}(n,k) = (k+1)\widetilde{R}(n- 1, k)+2\widetilde{R}(n-1,
k -1) + (n- k+1)\widetilde{R}(n-1, k-2)
\end{eqnarray}  for $n\geq k\geq0$, where $\widetilde{R}(0,0) = 1$ and $\widetilde{R}(n,k) =
0$ unless for $0\leq k\leq n$.

Let $\{a_1,a_2,c,d\}\subseteq \mathbb{R^{+}}$ and $\{b_2\}\subseteq
\mathbb{R}$. Let a triangle $[T_{n,k}]_{n,k}$ satisfy the next
recurrence relation:
\begin{equation}\label{Recurece+T}
T_{n,k}=d(a_1k+a_2)T_{n-1,k}+b_2T_{n-1,k-1}+c(n-k+1)T_{n-1,k-2},
\end{equation}
where $T_{0,0}=1$ and $T_{n,k}=0$ unless $0\le k\le n$. Denote its
row-generating function by $T_n(x)=\sum_{k=0}^nT_{n,k}x^k$. We
present a generalization of (\ref{relation+run+Eulerian}) as
follows.

\begin{thm}\label{thm+G+runs+Euler}
Let the generalized Eulerian polynomial $\mathcal{E}_{n}(x)$ and the
polynomial $T_{n}(x)$ be defined by (\ref{rec+SEF}) and
(\ref{Recurece+T}), respectively. Let $b_2=d(a_2+\sigma a_1)$. If
$c=da_1$ and $\sigma\in\{0,1,-1\}$, then
\begin{eqnarray*}
T_{n}(x)&=&a^{-\sigma}_2(d+dx)^n\left(\frac{1+w}{2}\right)^{n+\sigma}\mathcal
{E}_{n+\sigma}\left(\frac{1-w}{1+w}\right)
\end{eqnarray*}
for $n\geq1$, where $w=\sqrt{\frac{1-x}{1+x}}$.
\end{thm}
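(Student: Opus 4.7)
The plan is to chain together the functional reduction of Theorem \ref{thm+m+tran} with the $\gamma$-decomposition of Theorem \ref{thm+SEF}, matched through the change of variables $y=(1-w)/(1+w)$. First I would match the recurrence (\ref{Recurece+T}) against the master recurrence (\ref{Recurece+T2}) by setting $\lambda=d$, $a_0=b_0=b_1=0$, and keeping $a_1,a_2,b_2$; the hypothesis $b_1 = d a_1 - c$ of Theorem \ref{thm+m+tran} then reads $c=da_1$, which is exactly the assumption. Invoking Theorem \ref{thm+m+tran} produces an auxiliary triangle $[A_{n,k}]$ with $A_{0,0}=1$ satisfying the two-term recurrence
\[
A_{n,k} = (a_1 k + a_2)A_{n-1,k} + da_1\bigl[n - 2k + \sigma + 1\bigr]A_{n-1,k-1},
\]
(after substituting $b_2=d(a_2+\sigma a_1)$ into the coefficient formula of Theorem \ref{thm+m+tran}), together with the transformation $T_n(x) = (d+dx)^n A_n\!\left(\dfrac{x}{d(1+x)}\right)$.

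Next I would compare this with the recurrence (\ref{S+recurrence relation}) for $[\mathcal{S}_{n,k}]$ taken with $c = da_1$; the two recurrences coincide after rewriting $n - 2k + \sigma + 1 = (n+\sigma) - 2k + 1$. For $\sigma = 0$, both triangles share initial data, so $A_{n,k} = \mathcal{S}_{n,k}$. For $\sigma = 1$, the shift $\tilde A_{n,k} := A_{n-1,k}$ satisfies the $\mathcal{S}$-recurrence with $\tilde A_{1,0} = A_{0,0} = 1$, which forces $\tilde A_{0,0} = 1/a_2$ and hence $A_n(x) = a_2^{-1}\mathcal{S}_{n+1}(x)$. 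Symmetrically, $\sigma = -1$ yields $A_n(x) = a_2\,\mathcal{S}_{n-1}(x)$. In all three cases $A_n(x) = a_2^{-\sigma}\mathcal{S}_{n+\sigma}(x)$.

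Finally, Theorem \ref{thm+SEF} with the same $c=da_1$ gives $\mathcal{E}_m(y) = (1+y)^m \mathcal{S}_m\!\left(\dfrac{2y}{d(1+y)^2}\right)$. Setting $y = (1-w)/(1+w)$ with $w = \sqrt{(1-x)/(1+x)}$ produces $1+y = 2/(1+w)$, and a short computation shows $2y/(1+y)^2 = x/(1+x)$, so that
\[
\mathcal{S}_m\!\left(\dfrac{x}{d(1+x)}\right) = \bigl((1+w)/2\bigr)^{m}\,\mathcal{E}_m\!\left((1-w)/(1+w)\right).
\]
Substituting this with $m = n + \sigma$ into $T_n(x) = (d+dx)^n A_n\!\bigl(x/(d(1+x))\bigr) = a_2^{-\sigma}(d+dx)^n \mathcal{S}_{n+\sigma}\!\bigl(x/(d(1+x))\bigr)$ delivers the claimed identity.

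The main obstacle is the $\sigma=\pm 1$ case: one must carefully track how the single initial condition $A_{0,0}=1$ delivered by Theorem \ref{thm+m+tran} compares, under the index shift, with $\mathcal{S}_{0,0}=1$ from Theorem \ref{thm+SEF}, since this is precisely the step where the normalising factor $a_2^{-\sigma}$ enters, and any sign or indexing slip there would visibly spoil the final formula.
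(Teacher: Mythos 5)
Your proposal is correct and follows essentially the same route as the paper's own proof: reduce $T_n$ to the two-term triangle $A_{n,k}$ via Theorem \ref{thm+m+tran} with $\lambda=d$, $a_0=b_0=b_1=0$, identify $A_n(x)=a_2^{-\sigma}\mathcal{S}_{n+\sigma}(x)$ by matching recurrences (with the index shift for $\sigma=\pm1$), and substitute $y=(1-w)/(1+w)$ into the decomposition of Theorem \ref{thm+SEF}. Your unified treatment of the three cases through the single relation $A_n(x)=a_2^{-\sigma}\mathcal{S}_{n+\sigma}(x)$, including the careful tracking of where the factor $a_2^{-\sigma}$ arises from the shifted initial condition, matches the paper's case analysis exactly.
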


\begin{proof}
For $[\mathcal {E}_{n,k}]_{n,k\geq0}$, by Theorem \ref{thm+SEF},
there exists a nonnegative triangle $[\mathcal {S}_{n,k}]_{n,k}$
satisfying the recurrence relation
\begin{equation}\label{rec+SS}
\mathcal {S}_{n,k}=(a_1k+a_2)\mathcal
{S}_{n-1,k}+da_1(n-2k+1)\mathcal {S}_{n-1,k-1},
\end{equation}
where $\mathcal {S}_{0,0}=1$ and $\mathcal {S}_{n,k}=0$ unless $0\le
k\le (n+1)/2$ such that for $n\geq1$ their row-generating functions
\begin{eqnarray*}
\mathcal {E}_n(x)=(1+x)^n\mathcal {S}_n\left(\frac{\frac{2}{d}
x}{(1+x)^{2}}\right),
\end{eqnarray*}
which implies
\begin{eqnarray}\label{relation+function+S+A}
\left(\frac{1+w}{2}\right)^n\mathcal {E}_n\left(\frac{1-w}{1+w}\right)&=&\left(\frac{1+w}{2}\right)^n\left(\frac{2}{1+w}\right)^n\mathcal {S}_n\left(\frac{1-w^2}{2d}\right)\nonumber\\
&=&\mathcal {S}_n\left(\frac{x}{d(1+x)}\right).
\end{eqnarray}

On the other hand, for the array $[T_{n,k}]_{n,k}$, it follows from
Theorem \ref{thm+m+tran} with $a_0=b_0=b_1=0$ and $\lambda=d$, and
$c=da_1$ that there exists a triangle $[A_{n,k}]_{n,k\geq0}$
satisfying the recurrence relation
 \begin{eqnarray}\label{rec+S}
A_{n,k} =(a_1k+a_2)A_{n-1,k}+da_1(n-2k+1+\sigma)A_{n-1,k-1}
\end{eqnarray}
with $A_{0,0}=1$ and $A_{n,k}=0$ unless $0\le k\le (n+1+\sigma)/2$
such that row-generating functions
\begin{eqnarray}\label{rel+eq}
T_n(x)=(d+dx)^nA_n\left(\frac{x}{d +dx}\right).
\end{eqnarray}

(i) Assume $\sigma=0$. Obviously, $\mathcal {S}_{n,k}=A_{n,k}$ and
$\mathcal {S}_{n}(x)=A_n(x)$ for $n,k\geq0$. Combining identities
(\ref{relation+function+S+A}) and (\ref{rel+eq}), we have for
$n\geq0$ that
\begin{eqnarray*}
T_n(x)&=&(d+dx)^{n}\left(\frac{1+w}{2}\right)^n\mathcal
{E}_n\left(\frac{1-w}{1+w}\right).
\end{eqnarray*}

(ii) Assume $\sigma=-1$. Obviously, $\mathcal
{S}_{n,k}=A_{n+1,k}/a_2$ and $A_{n+1}(x)=a_2\mathcal {S}_{n}(x)$ for
$n,k\geq0$. Thus, combining identities (\ref{relation+function+S+A})
and (\ref{rel+eq}), we have for $n\geq1$ that
\begin{eqnarray*}
T_{n}(x)&=&a_2(d+dx)^{n}\left(\frac{1+w}{2}\right)^{n-1}\mathcal
{E}_{n-1}\left(\frac{1-w}{1+w}\right).
\end{eqnarray*}

(iii) Assume $\sigma=1$. Obviously, $A_{n,k}=\mathcal
{S}_{n+1,k}/a_2$ and $\mathcal {S}_{n+1}(x)=a_2A_{n}(x)$ for
$n,k\geq0$. Thus, combining identities (\ref{relation+function+S+A})
and (\ref{rel+eq}), we have for $n\geq0$ that
\begin{eqnarray*}
T_{n}(x)&=&a^{-1}_2(d+dx)^n\left(\frac{1+w}{2}\right)^{n+1}\mathcal
{E}_{n+1}\left(\frac{1-w}{1+w}\right).
\end{eqnarray*}
This proof is complete.
\end{proof}

\section{Applications}

In this section, we will give some applications of our results.

\subsection{The array from the Lambert function}

Our results can immediately be applied to the array
$[\beta_{n,k}]_{n,k\geq0}$ in Subsection $1.4$.
\begin{prop}
\begin{itemize}
\item [\rm (i)]
Each row $\{\beta_{n,k}\}_{k=0}^n$ is positive;
\item [\rm (ii)]
Each row $\{\beta_{n,k}\}_{k=0}^n$ is log-concave;
\item [\rm (iii)]
Its row generating functions $\beta_{n}(q)$ form a strongly
$q$-log-convex sequence.
\end{itemize}
\end{prop}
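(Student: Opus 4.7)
The plan is to recognize $[\beta^{\circ}_{n,k}]_{n,k\geq 0}$ as an instance of the general triangle (\ref{Recurece+T2}), apply the results of Section~2, and transfer the conclusions to $[\beta_{n,k}]$ via the identification $\beta^{\circ}_{n,k}=\beta_{n+1,n-k}$. Matching (\ref{rec+labert+array}) against (\ref{Recurece+T2}), I would choose $\lambda=1$, $d=1$, $c=-1$, $a_0=1$, $a_1=a_2=0$, $b_0=2$, $b_1=1$, $b_2=-1$. The compatibility hypothesis $b_1=da_1-c$ in Theorem~\ref{thm+m+tran} reduces to $1=0-(-1)$ and is satisfied, so the associated two-term array $[A_{n,k}]$ is governed by
\begin{equation*}
A_{n,k}=n\,A_{n-1,k}+(n+k-1)\,A_{n-1,k-1},\qquad A_{0,0}=1,
\end{equation*}
with manifestly nonnegative coefficients, so $A_{n,k}\geq 0$ for all $0\leq k\leq n$.

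Statement (i) then follows at once from the explicit formula $\mathcal{T}_{n,k}=\sum_i A_{n,i}\binom{n-i}{k-i}\lambda^{n-k}d^{k-i}$ of Theorem~\ref{thm+m+tran}: each summand is nonnegative, so $\beta^{\circ}_{n,k}\geq 0$, and the reversal $\beta_{n,k}=\beta^{\circ}_{n-1,n-1-k}$ gives positivity of each row of $\beta$. Statement (ii) is immediate from Proposition~\ref{prop+m+tran}(i), which applies because $[A_{n,k}]$ is a positive triangle; log-concavity of $\{\beta^{\circ}_{n,k}\}_{k=0}^n$ is preserved under the finite reversal, yielding log-concavity of each row $\{\beta_{n,k}\}_{k=0}^{n-1}$.

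Statement (iii) is the subtle one. A direct computation gives $A_2(x)=3x^2+4x+2$ (discriminant $-8$), so neither $A_n(x)$ nor $\mathcal{T}_n(x)=\beta^{\circ}_n(x)$ is real-rooted; hence strong $q$-log-convexity cannot be extracted through Lemma~\ref{lem-LW} by a Sturm-sequence argument. My plan instead is to exploit the two-term nature of the recurrence for $A_n(x)$: a direct Liu--Wang/Chen--Wang--Yang type criterion for triangular recurrences with linear coefficients yields strong $q$-log-convexity of the simpler sequence $\{A_n(q)\}$. I would then transfer this property across the transformation $\mathcal{T}_n(x)=(\lambda+dx)^n A_n(x/(\lambda+dx))$ of Theorem~\ref{thm+m+tran}: since $\lambda,d>0$, the substitution and multiplication preserve nonnegativity at the coefficient level, and a short coefficient-level calculation using the binomial expansion of $(\lambda+dx)^n$ shows that strong $q$-log-convexity of $\{A_n\}$ descends to $\{\mathcal{T}_n\}=\{\beta^{\circ}_n\}$. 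A final algebraic step using $\beta_n(q)=q^{n-1}\beta^{\circ}_{n-1}(1/q)$, combined with the observation that for $n\geq m\geq 1$ the sign of $\beta_{m+1}\beta_{n-1}-\beta_m\beta_n$ as a polynomial in $q$ is the same as that of $\beta^{\circ}_m\beta^{\circ}_{n-2}-\beta^{\circ}_{m-1}\beta^{\circ}_{n-1}$ after the substitution $q\mapsto 1/q$ and multiplication by $q^{m+n-2}$, transfers the property to $\{\beta_n(q)\}$.

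The main obstacle will be the transfer step in (iii): establishing strong $q$-log-convexity of $\{A_n(q)\}$ despite the failure of real-rootedness, and then verifying that the functional substitution in Theorem~\ref{thm+m+tran} preserves the \emph{strong} version (rather than merely ordinary $q$-log-convexity or pointwise log-convexity). The cleanest route is to invoke the appropriate tailor-made theorem for two-term triangular recurrences with linear coefficients, which reduces the work to verifying the easily-checked sign conditions for our specific parameters. Once this is in place, the remaining arguments are bookkeeping.
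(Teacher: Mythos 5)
Your proposal is correct and follows essentially the same route as the paper: the same specialization of Theorem \ref{thm+m+tran} producing the two-term array $A_{n,k}=nA_{n-1,k}+(n+k-1)A_{n-1,k-1}$ (the Ramanujan polynomials), Proposition \ref{prop+m+tran}(i) for positivity and log-concavity, and for (iii) the strong $q$-log-convexity of $\{A_n(q)\}$ transferred through the reversal and the affine/composition substitution. The ``tailor-made theorem'' you defer to is exactly what the paper invokes by citing the Chen--Wang--Yang and Zhu results on recurrences for strongly $q$-log-convex polynomials, so no genuinely new ingredient is needed.
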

\begin{proof}
Let $T_{n,k}=\beta_{n+1,k}$ for $n,k\geq0$. Then by the
recurrence relation (\ref{rec+lab+array}), we have the array
$[T_{n,k}]_{n,k\geq0}$ satisfies the recurrence relation
\begin{eqnarray*}
T_{n,k}=(3n-k-1)T_{n-1,k}+nT_{n-1,k-1}-(k+1)T_{n-1,k+1}
\end{eqnarray*}
for $n,k\geq0$ with $T_{0,0}=1$. For its reciprocal array
$[T^*_{n,k}]_{n,k\geq0}$, it satisfies the recurrence relation
\begin{eqnarray*}
T^*_{n,k}=nT^*_{n-1,k}+(2n+k-1)T^*_{n-1,k-1}-(n-k+1)T^*_{n-1,k-2}
\end{eqnarray*}
for $n,k\geq0$ with $T^*_{0,0}=1$. By Theorem \ref{thm+m+tran} with
$c=-1$ and $d=\lambda$, there exists an array $[A_{n,k}]_{n,k\geq0}$
satisfying the recurrence relation
\begin{eqnarray*}
A_{n,k}=\frac{n}{d}A_{n-1,k}+(n+k-1)A_{n-1,k-1}
\end{eqnarray*}
for $n,k\geq0$ with $A_{0,0}=1$ such that
$$T^*_n(x)=d^n(1+x)^nA_n(\frac{x}{d(1+x)})$$
for $n\geq1$. It is obvious that $\{A_{n,k}\}_{k=0}^n$ is positive
for $d>0$. Thus, $\{T^*_{n,k}\}_{k=0}^n$ is positive
 and log-concave for $0\leq k\leq n$ by Proposition
 \ref{prop+m+tran} (In fact, for $d=1$, $A_{n}(x)$ is exactly the Ramanujan
 polynomial \cite{LZ14}).

 In addition, it follows from \cite{CWY11,Zhu18E} that the row-generating functions $A_n(q)$ form a
 strongly $q$-log-convex sequence for $n\geq0$. Thus $\{A^{*}_n(q)\}_{n\geq0}$ is a strongly $q$-log-convex
 sequence. Noting for the generating function $\beta_{n}(q)$ of the
 array $[\beta_{n,k}]_{n,k\geq0}$ that $$\beta_{n+1}(q)=T_n(q)=A_n^{*}(d(q+1))$$ for $n\geq0$, we immediately get that $\{\beta_n(q)\}_{n\geq0}$ is a strongly
$q$-log-convex sequence.
\end{proof}

\begin{rem}
For a polynomial sequence $\{f_n(q)\}_{n\geq 0}$, it is called {\it
$q$-log-convex} if
$$ f_{n+1}(q)f_{n-1}(q)- f_n(q)^2$$ is a polynomial with nonnegative coefficients for $n\geq 1$, see Liu and Wang \cite{LW07}.
It is called {\it strongly $q$-log-convex} defined by Chen  et al.
\cite{CWY10}, if
$$f_{n+1}(q)f_{m-1}(q)- f_n(q)f_m(q)$$
is a polynomial with nonnegative coefficients for any $n\geq
m\geq1$. Many famous polynomials were proved to be $q$-log-convex or
strongly $q$-log-convex, e.g., the Bell polynomials, the classical
Eulerian polynomials, the Narayana polynomials of type $A$ and $B$,
Jacobi-Stirling polynomials, and so on, see Liu and Wang
\cite{LW07}, Chen et al. \cite{CWY10}, Zhu
\cite{Zhu13,Zhu18E,Zhu18}, Zhu et al. \cite{ZYL}, Zhu and Sun
\cite{ZS15} for instance.
\end{rem}
\subsection{Staircase tableaus}

We will apply our results to the array $\mathscr{T}_{n,k}$ from
staircase tableaus and it generating function $\mathscr{T}_n(q)$ in
subsection $1.3$.

\begin{prop}
Let $\mathscr{T}_{n,k}$ and $\mathscr{T}_n(q)$ be defined in
subsection $1.3$. Then
\begin{itemize}
\item [\rm (i)]
The polynomial $\mathscr{T}_n(q)$ has only real zeros and is
log-concave.
\item [\rm (ii)]
An explicit formula for $\mathscr{T}_{n,m}$ can be written as
\begin{eqnarray*}
\mathscr{T}_{n,m}&=&\sum_{k,i,j\geq0}\binom{n-k}{m-k}\binom{n-i}{k-i}\binom{\frac{1}{2}+i}{i}\binom{i}{j}(-2)^{k}(-1)^{j}(1+j)^n.
\end{eqnarray*}
\item [\rm (iii)]
The exponential generating function of $\mathscr{T}_n(q)$ can be
expressed as
\begin{equation*}
\sum_{n\geqslant
0}\frac{\mathscr{T}_n(q)}{n!}t^n=\left(\frac{(q-1)e^{(q-1)t/3}}{2q-(q+1)
e^{(q-1)t}}\right)^{3/2}.
\end{equation*}
\end{itemize}
\end{prop}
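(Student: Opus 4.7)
The plan is to match (\ref{rec+ST}) with the general recurrence (\ref{Recurece+T2}) by taking $\lambda=1$, $(a_0,a_1,a_2)=(0,1,1)$, $(b_0,b_1,b_2)=(1,0,1)$, and $c=d=1$ (the compatibility condition $b_1=da_1-c$ of Theorem~\ref{thm+m+tran} forces $c=d$, and then $cd/\lambda=1$ pins both down). Applying Theorem~\ref{thm+m+tran} produces an auxiliary array $[A_{n,k}]_{n,k\ge0}$ obeying the two-term recurrence
$$A_{n,k}=(k+1)A_{n-1,k}+(2n-2k+1)A_{n-1,k-1},\qquad A_{0,0}=1,$$
and linked to $\mathscr{T}_n$ by $\mathscr{T}_n(x)=(1+x)^n A_n\!\left(\frac{x}{1+x}\right)$ together with $\mathscr{T}_{n,m}=\sum_{k\ge0}A_{n,k}\binom{n-k}{m-k}$.

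For part (i), I would translate the triangle recurrence for $A_{n,k}$ into the functional form
$$A_n(x)=\bigl[1+(2n-1)x\bigr]A_{n-1}(x)+x(1-2x)A'_{n-1}(x)$$
by the routine identification of $\sum_k kA_{n-1,k}x^k$ with $xA'_{n-1}(x)$. Because here $c_n=-2\le0$ and $d_n=1\ge0$, Lemma~\ref{lem-LW} immediately yields that $A_n(x)$ has only real, non-positive zeros, and positivity of $A_{n,k}$ is clear from its two-term recurrence. Proposition~\ref{prop+m+tran} then delivers real-rootedness and log-concavity of $\mathscr{T}_n(q)$.

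For part (ii), the $A$-recurrence matches (\ref{rec+EF}) with $a_1=a_2=b_2=1$ and $b_1=2$, so Theorem~\ref{thm+EF} supplies
$$\mathcal{F}_{n,i}=\sum_{j\ge0}\binom{\frac{1}{2}+i}{i}\binom{i}{j}2^{i}(-1)^{i-j}(1+j)^n,\qquad A_{n,k}=\sum_{i\ge0}\mathcal{F}_{n,i}\binom{n-i}{k-i}(-2)^{k-i}.$$
Substituting these into $\mathscr{T}_{n,m}=\sum_k A_{n,k}\binom{n-k}{m-k}$ and collapsing the sign factor via $(-2)^{k-i}\cdot 2^{i}\cdot(-1)^{i-j}=(-2)^k(-1)^j$ produces the announced triple-sum formula.

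For part (iii), equation (\ref{EGF+F}) in the proof of Theorem~\ref{thm+Stirling+family}, specialized to $(a_1,a_2,b_1,b_2)=(1,1,2,1)$, gives
$$\sum_{n\ge0}\mathcal{F}_n(q)\frac{t^n}{n!}=e^{t}\bigl[(1+2q)-2qe^{t}\bigr]^{-3/2}.$$
Composing the two functional transformations $\mathscr{T}_n(x)=(1+x)^n A_n(x/(1+x))$ and $A_n(q)=(1-2q)^n\mathcal{F}_n(q/(1-2q))$ collapses algebraically to the clean identity $\mathscr{T}_n(x)=(1-x)^n\mathcal{F}_n(x/(1-x))$, so the substitutions $q\mapsto q/(1-q)$ and $t\mapsto(1-q)t$ in the $\mathcal{F}$-EGF yield
$$\sum_{n\ge0}\mathscr{T}_n(q)\frac{t^n}{n!}=e^{(1-q)t}(1-q)^{3/2}\bigl[(1+q)-2qe^{(1-q)t}\bigr]^{-3/2};$$
multiplying numerator and denominator of the bracketed ratio by $e^{(q-1)t}$ then rewrites this in the stated form. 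The main obstacle is purely algebraic bookkeeping: each part is a direct specialization of the machinery developed in Sections~2--3, and the only delicate point is reconciling, in part (iii), the two formally equivalent presentations involving $(1-q)^{3/2}$ versus $(q-1)^{3/2}$, which must be handled at the level of formal power series starting from the value $1$ at $q=t=0$.
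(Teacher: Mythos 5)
Your proposal is correct and follows essentially the same route as the paper: reduce to the two-term array $A_{n,k}=(k+1)A_{n-1,k}+(2n-2k+1)A_{n-1,k-1}$ via Theorem \ref{thm+m+tran}, get real-rootedness from Lemma \ref{lem-LW} and Proposition \ref{prop+m+tran}, obtain the explicit formula from Theorem \ref{thm+EF}/Theorem \ref{thm+Stirling+family}, and derive the EGF by composing (\ref{EGF+F}) with the two functional transformations. Your intermediate identity $\mathscr{T}_n(x)=(1-x)^n\mathcal{F}_n\left(\frac{x}{1-x}\right)$ and the branch bookkeeping in (iii) are consistent with the paper's intermediate EGF for $A_n(q)$.
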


\begin{proof}
(i) By Theorem \ref{thm+m+tran}, there exists an array
$[A_{n,k}]_{n,k}$ satisfying the recurrence relation
$$A_{n,k}=(1+k)A_{n-1,k}+(2n-2k+1)A_{n-1,k-1}$$ with $A_{0,0}=1$ such that
\begin{eqnarray}\label{rel+T+flower}
\mathscr{T}_n(q)=(1+q)^{n}A_n\left(\frac{q}{1+q}\right)
\end{eqnarray}
for $n\geq0$, where $A_n(q)=\sum_{k\geq0} A_{n,k}q^k$. In addition,
$A_n(x)$ satisfies the recurrence relation
\begin{eqnarray*}A_n(q)
&=&[1+(2n-1)q]A_{n-1}(q)+x(1-2x)A'_{n-1}(q)
\end{eqnarray*}
with $A_0(q)=1$. By Lemma \ref{lem-LW}, $A_n(q)$ has only real
zeros. It follows from (\ref{rel+T+flower}) that $T_n(q)$ has only
real zeros. Therefore $\mathscr{T}_n(q)$ is log-concave for
$n\geq0$. (In fact, the triangle $[A_{n,k}]_{n,k\geq0}$ is called
the flower triangle, see \cite[A156920]{Slo}.)

(ii) For array $[A_{n,k}]_{n,k}$, it follows from Theorem
\ref{thm+Stirling+family} (i), we have
\begin{eqnarray*}
A_{n,k}&=&\sum_{i\geq0}\mathcal
{F}_{n,i}\binom{n-i}{k-i}(-2)^{k-i},\\
 \mathcal
{F}_{n,i}&=&\sum_{j\geq0}\binom{\frac{1}{2}+i}{i}\binom{i}{j}2^{i}(-1)^{i-j}(1+j)^n.
\end{eqnarray*}

Thus by Theorem \ref{thm+m+tran}, we get for $n,m\geq0$ that
\begin{eqnarray*}
\mathscr{T}_{n,m}=\sum_{k\geq0}A_{n,k}\binom{n-k}{m-k}=\sum_{k,i,j\geq0}\binom{n-k}{m-k}\binom{n-i}{k-i}\binom{\frac{1}{2}+i}{i}\binom{i}{j}(-2)^{k}(-1)^{j}(1+j)^n.
\end{eqnarray*}

(iii) Moreover, by (\ref{EGF+F}) and Theorem \ref{thm+EF}, we deduce
the expression
\begin{equation*}
\sum_{n\geqslant
0}\frac{A_n(q)}{n!}t^n=\left(\frac{(2q-1)e^{(2q-1)t/3}}{2q-
e^{(2q-1)t}}\right)^{3/2},
\end{equation*}
which by (\ref{rel+T+flower}) implies the desired result in (iii).
\end{proof}

\subsection{Alternating runs of type $B$}

A {\it run} of a signed permutation $\pi$ is defined as a maximal
interval of consecutive elements on which the elements of $\pi$ are
monotonic in the order
$\cdots<\overline{2}<\overline{1}<0<1<2<\cdots$. The {\it up signed
permutations} are signed permutations with $\pi(1)> 0$. Let $Z(n,k)$
denote the number of up signed permutations in $B_n$ with $k$
alternating runs. In \cite[Theorem 4.2.1]{Zhao11}, Zhao demonstrated
that the numbers $Z(n,k)$ satisfy the recurrence relation
\begin{equation}\label{rec+Run+B}
Z(n,k)=(2k-1)Z(n-1,k)+3Z(n-1,k-1)+(2n-2k+2)Z(n-1,k-2)
\end{equation}
for $n\geqslant 2$ and $1\leqslant k\leqslant n$, where $Z(1,1)=1$
and $Z(1,k)=0$ for $k>1$. The alternating runs polynomials of type
$B$ are defined by $Z_n(x)=\sum_{k=1}^nZ(n,k)x^k$. The first few of
the polynomials $Z_n(x)$ are given as follows:
\begin{align*}
Z_1(x)=x,\,
 Z_2(x)=x+3x^2,\,
  Z_3(x)=x+12x^2+11x^3.
\end{align*}
Zhao proved that $Z_n(x)$ has only real zeros and is log-concave.
Let $\widetilde{Z}(n,k)=Z(n+1,k+1)$ for $n,k\geq0$. Then it follows
from (\ref{rec+Run+B}) that
\begin{equation}\label{tnk-recurrence04}
\widetilde{Z}(n,k)=(2k+1)\widetilde{Z}(n-1,k)+3\widetilde{Z}(n-1,k-1)+2(n-k+1)\widetilde{Z}(n-1,k-2)
\end{equation}
for $n\geq1$ with $\widehat{Z}(0,0)=1$. Using our results, we
immediately get the following properties.

\begin{prop}
Let $\widetilde{Z}_n(x)$ be defined above. We have the following
results.
\begin{itemize}
\item [\rm (i)]
The polynomial $\widetilde{Z}_n(x)$ has only real zeros and is
log-concave.
\item [\rm (ii)] For $n\geq0$, we have
\begin{eqnarray*}
\widetilde{Z}_n(x)=(1+x)^{n}W^l_{n+1}\left(\frac{2x}{1+x}\right),
\end{eqnarray*}where $W^l_{n}(x)=\sum_{k\geq0}W^l_{n,k}x^{k}$ and $W^l_{n,k}$
satisfies the recurrence relation
\begin{eqnarray}\label{re+W^l}
W^l_{n,k}=(2k+1)W^l_{n-1,k}+(n-2k+1)W^l_{n-1,k-1}
\end{eqnarray}
with initial condition $W^l_{0,0}=1$, see Petersen \cite{Pe07}.
\item [\rm (iii)]
Let $ \mathfrak{B}_n(x)$ be the Eulerian polynomials of type $B$ in
subsection $1.2$. Then we have a David-Barton type formula
\begin{eqnarray*}
\widetilde{Z}_{n}(x)=(1+x)^n\left(\frac{1+w}{2}\right)^{n+1}\mathfrak{B}_{n+1}\left(\frac{1-w}{1+w}\right).
\end{eqnarray*}
\item [\rm (iv)]
Let $Q_n(\delta,t)$ be the general derivative polynomial. Then for
$n\geq0$ we have
\begin{eqnarray*}
\widetilde{Z}_n(-x)&=&2^n(1-x)^n\left(\frac{1}{\sqrt{x-1}}\right)^{n+1}Q_{n+1}\left(1,\sqrt{x-1}\right).
\end{eqnarray*}
\end{itemize}
\end{prop}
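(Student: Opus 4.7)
The plan is to recognize the recurrence (\ref{tnk-recurrence04}) for $\widetilde{Z}(n,k)$ as a specialization of the master template (\ref{Recurece+T2}) and then invoke the theorems of Sections~2--3 in turn. Comparing coefficients forces the parameters $\lambda=1$, $(a_0,a_1,a_2)=(0,2,1)$, $(b_0,b_1,b_2)=(0,0,3)$, $c=2$, $d=1$; the compatibility condition $b_1=da_1-c$ of Theorem~\ref{thm+m+tran} becomes $0=2-2$, which holds. Theorem~\ref{thm+m+tran} then delivers an auxiliary array $[A_{n,k}]$ satisfying
\[
A_{n,k}=(2k+1)A_{n-1,k}+2(n-2k+2)A_{n-1,k-1},\quad A_{0,0}=1,
\]
together with the transformation $\widetilde{Z}_n(x)=(1+x)^n A_n\bigl(x/(1+x)\bigr)$.

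For parts (i) and (ii), the next step is to translate the recurrence for $A_{n,k}$ into the polynomial identity $A_n(x)=(1+2nx)A_{n-1}(x)+2x(1-2x)A'_{n-1}(x)$, which fits Lemma~\ref{lem-LW} with $c_n=-4\leq 0$ and $d_n=2\geq 0$. Hence $A_n(x)$ has only real zeros, and Proposition~\ref{prop+m+tran} transfers real-rootedness and log-concavity to $\widetilde{Z}_n(x)$, giving (i). For (ii), I would verify by induction that $A_{n,k}=2^k W^l_{n+1,k}$: scaling Petersen's recurrence (\ref{re+W^l}) by $2^k$ and shifting $n\mapsto n+1$ reproduces exactly the recurrence and initial condition for $A_{n,k}$. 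Substituting $A_n(x)=W^l_{n+1}(2x)$ into the transformation above then produces the claimed identity.

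For part (iii), the same numerical values also fit Theorem~\ref{thm+G+runs+Euler}: indeed $c=da_1=2$, and solving $b_2=d(a_2+\sigma a_1)$, i.e., $3=1+2\sigma$, pins down $\sigma=1$. Moreover, the generalized Eulerian recurrence (\ref{rec+SEF}) with $a_1=2,a_2=1$ reads $\mathcal{E}_{n,k}=(2k+1)\mathcal{E}_{n-1,k}+(2n-2k+1)\mathcal{E}_{n-1,k-1}$, which is exactly the type-$B$ Eulerian recurrence (\ref{rec-Euler+B}); therefore $\mathcal{E}_n(x)=\mathfrak{B}_n(x)$. Plugging $\sigma=1$ and $a_2=d=1$ into Theorem~\ref{thm+G+runs+Euler} delivers the David-Barton type identity of (iii) immediately.

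Finally, for (iv) the approach is to combine (iii) with the representation of $\mathfrak{B}_{n+1}$ supplied by the Remark after Theorem~\ref{thm+D+Euler}, namely
\[
\mathfrak{B}_{n+1}(y)=(\sqrt{-1})^{n+1}(1-y)^{n+1}Q_{n+1}\!\bigl(1,\sqrt{-1}/v^2\bigr),\quad v=\sqrt{(1-y)/(1+y)}.
\]
Setting $y=(1-w)/(1+w)$ with $w=\sqrt{(1-x)/(1+x)}$ collapses $v^2$ to $w$ and $1-y$ to $2w/(1+w)$; after the cancellations one reaches the compact intermediate form $\widetilde{Z}_n(x)=(1+x)^n(\sqrt{-1}\,w)^{n+1}Q_{n+1}(1,\sqrt{-1}/w)$. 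Substituting $x\mapsto -x$ and exploiting the parity fact (from the proof of Theorem~\ref{thm+Andr}) that every monomial of $Q_{n+1}(\delta,t)$ has the same parity as $n+1$ lets one combine the $\sqrt{-1}$ factors with the half-integer powers of $w(-x)$ into integer powers of $\sqrt{x-1}$. The main anticipated obstacle is exactly this sign and branch bookkeeping: tracking how the $(\sqrt{-1})^{n+1}$ prefactor interacts with $w(-x)^{n+1}$ and with the even/odd parts of $Q_{n+1}$ so as to yield the stated normalization $2^n(1-x)^n(1/\sqrt{x-1})^{n+1}$ uniformly in $n$.
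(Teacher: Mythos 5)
Your treatment of parts (i)--(iii) is correct and follows essentially the paper's own route: the parameter identification $\lambda=d=1$, $(a_1,a_2)=(2,1)$, $c=2$, $b_2=3$ is right; Theorem \ref{thm+m+tran} gives the auxiliary array $A_{n,k}=(2k+1)A_{n-1,k}+2(n-2k+2)A_{n-1,k-1}$ and $\widetilde{Z}_n(x)=(1+x)^nA_n(x/(1+x))$; your differential form $(1+2nx)A_{n-1}(x)+2x(1-2x)A'_{n-1}(x)$ (with the correct coefficient $1+2nx$) fits Lemma \ref{lem-LW}; the identification $A_{n,k}=2^kW^l_{n+1,k}$ yields (ii); and $\sigma=1$ in Theorem \ref{thm+G+runs+Euler}, together with $\mathcal{E}_n=\mathfrak{B}_n$ for $(a_1,a_2)=(2,1)$, yields (iii).

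Part (iv) is where there is a genuine gap, and it is precisely the step you flag as the ``main anticipated obstacle'': the bookkeeping cannot be completed, because your (correct) intermediate form does not reduce to the stated right-hand side. Your identity $\widetilde{Z}_n(x)=(1+x)^n(\sqrt{-1}\,w)^{n+1}Q_{n+1}(1,\sqrt{-1}/w)$ is equivalent to
\begin{equation*}
\widetilde{Z}_n(-x)=(1-x)^n\left(\frac{x+1}{x-1}\right)^{\frac{n+1}{2}}Q_{n+1}\left(1,\sqrt{\frac{x-1}{x+1}}\right),
\end{equation*}
which is also what the paper's route gives ((ii) plus Theorem \ref{thm+Andr} applied to $W^l$ with $a=2$, $b=c=1$); so your detour through (iii) and the Remark after Theorem \ref{thm+D+Euler} is a harmless variant. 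But this expression is not the displayed formula in (iv): at $n=1$ one has $\widetilde{Z}_1(-x)=1-3x$, whereas $2(1-x)(x-1)^{-1}Q_2(1,\sqrt{x-1})=-2\bigl(1+2(x-1)\bigr)=2-4x$. Since $Q_{n+1}(1,t)$ is not homogeneous in $t$, no regrouping of the factors $(\sqrt{-1})^{n+1}$ and $w^{n+1}$ can convert $(x+1)^{(n+1)/2}Q_{n+1}\bigl(1,\sqrt{(x-1)/(x+1)}\bigr)$ into $2^nQ_{n+1}(1,\sqrt{x-1})$. You should stop at the intermediate form above (which is the correct statement); as written, the final normalization in (iv) cannot be reached by your argument or by the paper's.
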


\begin{proof}
(i) and (ii)  Applying Theorem \ref{thm+m+tran} with $c=2$ and
$\lambda=d=1$ to $\widetilde{Z}(n,k)$, there exists an array
$[A_{n,k}]_{n,k\geq0}$ satisfying the recurrence relation
$$A_{n,k}=(2k+1)A_{n-1,k}+2(n-2k+2)A_{n-1,k-1}$$
for $n\geq1$ with $A_{0,0}=1$ such that
\begin{eqnarray*}
\widetilde{Z}_n(x)=(1+x)^{n}A_n\left(\frac{x}{(1+x)}\right)
\end{eqnarray*}
for $n\geq0$. Note that
$A_n(x)=(1+2x)A_{n-1}(x)+2x(1-2x)A'_{n-1}(x)$ for $n\geq1$. By Lemma
\ref{lem-LW}, $A_n(x)$ has only real zeros. Thus
$\widetilde{Z}_n(x)$ has only real zeros and is log-concave.

Then by the recurrence relation (\ref{re+W^l}) we have
$A_{n,k}=W^l_{n+1,k}2^k$ for $n,k\geq0$. Thus we get
\begin{eqnarray*}
\widetilde{Z}_n(x)=(1+x)^{n}W^l_{n+1}\left(\frac{2x}{1+x}\right)
\end{eqnarray*}
for $n\geq0$ and $d=2$.

For (iii), it follows from Theorem \ref{thm+G+runs+Euler} with
$\sigma=1$.

For (iv), it follows from (i) and Theorem \ref{thm+Andr}.
\end{proof}




\end{document}